\newcommand{\scal}[2]{\langle #1,#2\rangle}
\newcommand{\rr}[1]{\mathbf R^{#1}}
\newcommand{\zz}[1]{\mathbf Z^{#1}}
\newcommand{\nn}[1]{\mathbf N^{#1}}
\newcommand{\nm}[2]{\Vert #1\Vert _{#2}}
\newcommand{\Nm}[2]{\left \Vert #1\right \Vert _{#2}}
\newcommand{\nmm}[1]{\Vert #1\Vert }
\newcommand{\sets}[2]{\{ \, #1\, ;\, #2\, \} }
\newcommand{\ep}{\varepsilon}
\newcommand{\fy}{\varphi}
\newcommand{\cdo}{\, \cdot \, }
\newcommand{\eabs}[1]{\langle #1\rangle}
\newcommand{\vrum}{\vspace{0.1cm}}
\newcommand{\maclE}{\mathcal E}
\newcommand{\maclL}{\mathcal L}
\newcommand{\maclS}{\mathcal S}
\newcommand{\mascB}{\mathscr B}
\newcommand{\mascF}{\mathscr F}
\newcommand{\mascP}{\mathscr P}
\newcommand{\mabfk}{\boldsymbol k}
\newcommand{\mabfp}{{\boldsymbol p}}
\newcommand{\mabfq}{\boldsymbol q}
\newcommand{\mabfr}{\boldsymbol r}
\newcommand{\mabfz}{\boldsymbol z}
\newcommand{\Co}{\mathsf{Co}}
\newcommand{\splM}{\EuScript M}
\newcommand{\splW}{\EuScript W}
\renewcommand{\projlim}[1]{\underset{#1}{\operatorname{proj \, lim\, }}}
\newcommand{\sfW}{\mathsf{W}}
\numberwithin{equation}{section}          
\newtheorem{thm}{Theorem}
\numberwithin{thm}{section}
\newtheorem*{tom}{\rubrik}
\newcommand{\rubrik}{}
\newtheorem{prop}[thm]{Proposition}
\newtheorem{lemma}[thm]{Lemma}
\theoremstyle{definition}
\newtheorem{defn}[thm]{Definition}
\theoremstyle{remark}
\newtheorem{rem}[thm]{Remark}              
\title{Wiener estimates on modulation spaces}
\author{Joachim Toft}
\address{Department of Mathematics,
Linn{\ae}us University, V{\"a}xj{\"o}, Sweden}
\email{joachim.toft@lnu.se}
\keywords{Wiener spaces, modulation spaces,
Gelfand-Shilov, quasi-Banach spaces, coorbit spaces}
\subjclass{Primary: 42C20, 43A32, 42B35, 46E10, 
\quad Secondary: 46A16, 35A22, 37A05, 46E35}
\begin{document}

\par

\begin{abstract}
We characterise modulation spaces by suitable Wiener
estimates on the short-time Fourier transforms of
the involved functions and distributions. We use the results to
refine some formulae on periodic distributions with
Lebesgue estimates on their coefficients.
\end{abstract}

\maketitle

\section{Introduction}\label{sec0}

In the paper we characterise
Gelfand-Shilov spaces of functions and distributions,
modulation spaces and Gevrey classes in background of
various kinds of Wiener estimates. We apply the results
to deduce some refined formulae on periodic 
functions and distributions, given in \cite{ToNa}.

\par

Essential motivations arised in
\cite{ToNa} on characterizations of certain spaces of
periodic functions and distributions.
In fact, it follows from \cite{ToNa} that if $q\in (0,\infty ]$
and $f$ is a $2\pi$-periodic Gelfand-Shilov distribution on $\rr d$ with
Fourier coefficients $c(f,\alpha )$, $\alpha \in \zz d$, then
\begin{alignat}{4}
&\{ c(f,\alpha )\} _{\alpha \in \zz d} \in \ell ^q &
\quad & &\quad 
&\Leftrightarrow & \quad f &\in M^{\infty ,q} .
\label{Eq:IntrEquiv3}
\end{alignat}
Here $M^{\infty ,q}$ is the (unweighted) modulation
spaces with Lebesgue parameters $\infty$ and $q$. (See Section
\ref{sec1} or \cite{ToNa} for notations.)
We note that a proof of \eqref{Eq:IntrEquiv3} in the case
$q\in [1,\infty]$ can be found in e.{\,}g. \cite{RuSuToTo},
and with some extensions in \cite{Re}.

\par

An alternative formulation of \eqref{Eq:IntrEquiv3} is
\begin{align}
\{ c(f,\alpha )\} _{\alpha \in \zz d} \in \ell ^q 
\qquad 
&\Leftrightarrow \qquad \xi \mapsto \nm {V_\phi f(\cdo ,\xi )}{L^\infty (\rr d)} \in L^q .
\tag*{(\ref{Eq:IntrEquiv3})$'$}
\intertext{By observing that periodicity of $f$ induce the same periodicity
for $x\mapsto |V_\phi f(x,\xi )|$, it follows that
\eqref{Eq:IntrEquiv3}$'$ is the same as}
\{ c(f,\alpha )\} _{\alpha \in \zz d} \in \ell ^q 
\qquad 
&\Leftrightarrow \qquad \xi \mapsto \nm {V_\phi f(\cdo ,\xi )}{L^\infty ([0,2\pi ]^d)} \in L^q .
\tag*{(\ref{Eq:IntrEquiv3})$''$}
\intertext{
\indent
In Section \ref{sec2} we show that the latter equivalence hold true
with $L^r([0,2\pi]^d)$ norm in place of $L^\infty ([0,2\pi]^d)$ norm for every
$r\in (0,\infty ]$. That is, we improve \eqref{Eq:IntrEquiv3}$''$ into}
\{ c(f,\alpha )\} _{\alpha \in \zz d} \in \ell ^q 
\qquad 
&\Leftrightarrow \qquad \xi \mapsto \nm {V_\phi f(\cdo ,\xi )}{L^r ([0,2\pi ]^d)} \in L^q .
\tag*{(\ref{Eq:IntrEquiv3})$'''$}
\end{align}
In particular, if $q<\infty$ and choosing $q=r$, then we obtain
\begin{equation}\label{Eq:IntrEquiv3A}
\sum _{\alpha \in \zz d}|c(f,\alpha ) |^q <\infty
\qquad 
\Leftrightarrow \qquad 
\iint _{[0,2\pi ]^d\times \rr d} |V_\phi f(x,\xi )|^q\, dxd\xi <\infty .
\end{equation}
More generally, we deduce weighted versions of these
identities. Since our weights include general moderate
weights which are allowed possess exponential types
growth and decays, we formulate our results in
the framework of Gelfand-Shilov spaces of functions
and distributions.

\par

The improved equivalence \eqref{Eq:IntrEquiv3}$'''$ can
in the case $q,r\in [1,\infty]$ be obtained from \eqref{Eq:IntrEquiv3}$''$
by a suitable combination of H{\"o}lder's and Young's inequalities
and the inequality
\begin{align}
F(X) &\lesssim \int _\Omega 
\Phi (X-Y) F(Y )\, dY ,
\quad X\in \Omega =[0,2\pi]^d\times \rr d,
\label{Eq:ContConvPer}
\intertext{where}
\Phi (x,\xi ) &= \sum _{k\in \zz d}|V_\phi \phi (x-2\pi k,\xi )|
\quad \text{and}\quad F(X) = |V_\phi f (X)|,
\notag
\end{align}
which follows from Lemma 1.3.3 in \cite{Gc2} for $2\pi$-periodic
distributions $f$. It follows that this case can be handled by
straight-forward modifications of the methods that are used
when establishing basic results for classical modulation
spaces in \cite{F1} and in Chapter 11 in \cite{Gc2}.

\par

In our situation, the parameters $q$ and $r$ are, more generally, allowed
to belong to the full
interval $(0,\infty ]$ instead of $[1,\infty ]$. The classical approaches in
\cite{F1,FG1,FG2,Gc2}
are then insufficient because they require convex structures in the topology of
the involved vector spaces. This convexity is absent when $q<1$ or $r<1$.
%
%

\par

We manage our more general situation by 
using techniques based on ideas in \cite{GaSa,Rau1,Rau2,Toft15}
and which can handle Lebesgue and Wiener spaces which are
quasi-Banach spaces but may fail to be Banach spaces. Especially
we shall follow a main idea in \cite{GaSa,Toft15}
and replace the usual convolution, used in
\cite{F1,FG1,FG2,Gc2}, by a semi-continuous version which is less
sensitive when convexity is lacking in the topological structures.
For the semi-continuous convolution we deduce in Section \ref{sec2}
the needed Lebesgue and Wiener estimates.
%
%
%
In the end we achieve in Section \ref{sec2}
various types of characterizations of modulation spaces in terms
of Wiener norm estimates on the short-time Fourier transforms of
the functions and (ultra-)distributions under considerations. For example,
as special case of Propositions \ref{Prop:WienerEquiv}$'$ after Proposition
\ref{Prop:SemiContConvEstNonPer}, we have for $p,q,r\in (0,\infty ]$ that
\begin{equation}\label{Eq:ModSpaceIntrChar}
\nm f{M^{p,q}} \asymp \nm a{\ell ^{p,q}}
\quad \text{when}\quad
a(j) = \nm {V_\phi f}{L^r(j+[0,1]^{2d})}.
\end{equation}
Similar facts hold true for those Wiener amalgam spaces which are Fourier images
of modulation spaces of the form $M^{p,q}$. In particular our results can be
used to deduce certain invarians properties concerning the choice of
local component in the Wiener amalgam quasi-norm. (See also Proposition
\ref{Prop:SpecCaseWienerRel1}.) Here we remark that for Wiener amalgam
spaces which at the same time are Banach spaces, the approaches are often
less complicated and there are several
examples on other Banach spaces (e.{\,}g. suitable modulation spaces)
to furnish the local component in the Wiener amalgam norms.
(See e.{\,}g. \cite{FeLu,FeZi} and the references therein.)

\par

We also present some applications on periodic elements which gives
\eqref{Eq:IntrEquiv3}$'''$ and \eqref{Eq:IntrEquiv3A} as special cases. (See
Propositions \ref{Prop:PeriodicMod} and \ref{Prop:PerMod}$'$.)

\par

The Wiener spaces under considerations
can also be described in terms of coorbit spaces, whose general
theory was founded
by Feichtinger and Gr{\"o}chenig in \cite{FG1,FG2} and further developed
in different ways, e.{\,}g. by Rauhut in \cite{Rau1,Rau2}. Since our
investigations in Section \ref{sec2} concern quasi-Banach spaces
which may fail to be Banach spaces, our investigations are especially
linked to Rauhut's analysis in \cite{Rau1,Rau2}. 
%
%
In this context, a part of our analysis on modulation spaces can be formulated
as coorbit norm estimates of short-time Fourier transforms with local component
in $L^r$-spaces with $r\in (0,\infty ]$ and global component in other Lebesgue 
spaces. Proposition \ref{Prop:WienerEquiv}$'$ in Section \ref{sec2} then
shows that different choices of $r$ give rise to equivalent norm estimates
on short-time Fourier transforms.
Again we remark that if $r$ belongs
to the subset $[1,\infty ]$ of $(0,\infty ]$ and that all involved spaces
are Banach spaces, then our results can be obtained in other less complicated
ways, given in e.{\,}g. Chapters 11 and 12 in \cite{Gc2}.

\par

\section*{Acknowledgement}

\par

I am very grateful to Professor Hans Feichtinger for reading parts of the paper
and giving valuable comments, leading to improvements of the content
and the style.

\par

\section{Preliminaries}\label{sec1}

\par

In this section we recall some basic facts. We start by discussing
Gelfand-Shilov spaces and their properties. Thereafter we recall
some properties of modulation spaces and discuss different aspects
of periodic distributions

\par

\subsection{Gelfand-Shilov spaces and Gevrey classes}\label{subsec1.1}
Let $0<s,\sigma \in \mathbf R$ be fixed. Then the Gelfand-Shilov
space $\mathcal S_{s}^\sigma (\rr d)$
($\Sigma _{s}^\sigma (\rr d)$) of Roumieu type (Beurling type) with parameters $s$
and $\sigma$ consists of all $f\in C^\infty (\rr d)$ such that
\begin{equation}\label{gfseminorm}
\nm f{\mathcal S_{s,h}^\sigma }\equiv \sup \frac {|x^\alpha \partial ^\beta
f(x)|}{h^{|\alpha  + \beta |}\alpha !^s \, \beta !^\sigma}
\end{equation}
is finite for some $h>0$ (for every $h>0$). Here the supremum should be taken
over all $\alpha ,\beta \in \mathbf N^d$ and $x\in \rr d$. We equip
$\mathcal S_{s}^\sigma (\rr d)$ ($\Sigma _{s}^\sigma (\rr d)$) by the canonical inductive limit
topology (projective limit topology) with respect to $h>0$, induced by
the semi-norms in \eqref{gfseminorm}.

\par

\medspace

The \emph{Gelfand-Shilov distribution spaces} $(\mathcal S_s^{\sigma})'(\rr d)$
and $(\Sigma _s^\sigma)'(\rr d)$ are the dual spaces of $\mathcal S_s^{\sigma}(\rr d)$
and $\Sigma _s^\sigma (\rr d)$, respectively.  As for the Gelfand-Shilov spaces there 
is a canonical projective limit topology (inductive limit topology) for $(\maclS _s^{\sigma})'(\rr d)$ 
($(\Sigma _s^\sigma)'(\rr d)$).(Cf. \cite{GS, Pil1, Pil3}.)
For conveniency we set
$$
\maclS _s=\maclS _s^s,\quad \maclS _s'=(\maclS _s^s)',\quad
\Sigma _s=\Sigma _s^s
\quad \text{and}\quad
\Sigma _s'=(\Sigma _s^s)'.
$$

\par

From now on we let $\mathscr F$ be the Fourier transform which
takes the form
$$
(\mathscr Ff)(\xi )= \widehat f(\xi ) \equiv (2\pi )^{-\frac d2}\int _{\rr
{d}} f(x)e^{-i\scal  x\xi }\, dx
$$
when $f\in L^1(\rr d)$. Here $\scal \cdo \cdo$ denotes the usual
scalar product on $\rr d$. The map $\mathscr F$ extends 
uniquely to homeomorphisms on $\mathscr S'(\rr d)$,
from $(\mathcal S_s^\sigma )'(\rr d)$ to $(\mathcal S_\sigma ^s)'(\rr d)$ and
from $(\Sigma _s^\sigma)'(\rr d)$ to $(\Sigma _\sigma ^s)'(\rr d)$. Furthermore,
$\mascF$ restricts to
homeomorphisms on $\mathscr S(\rr d)$, from
$\mathcal S_s^\sigma (\rr d)$ to $\mathcal S_\sigma ^s(\rr d)$ and
from $\Sigma _s^\sigma (\rr d)$ to $\Sigma _\sigma ^s(\rr d)$,
and to a unitary operator on $L^2(\rr d)$. 

\par

\par

Next we consider Gevrey classes on $\rr d$. Let $\sigma \ge 0$.
For any compact set $K\subseteq \rr d$, $h>0$ and $f\in C^{\infty}(K)$ let
\begin{equation}\label{e2}
\nm {f}{K,h,\sigma} \equiv \underset{\alpha\in \nn d}\sup
\left (
\frac{\nm {\partial^{\alpha}f}{L^{\infty}(K)}}{h^{\vert \alpha\vert}\alpha ! ^\sigma}
\right ) .
\end{equation} 
The Gevrey class $\maclE _\sigma (K)$ ($\maclE _{0,\sigma}(K)$) of order $\sigma$ and
of Roumieu type (of Beurling type) is the set of all
$f\in C^{\infty}(K)$ such that \eqref{e2} is finite for some (for every)
$h>0$. We equipp $\maclE _\sigma (K)$ ($\maclE _{0,\sigma}(K)$) by
the inductive (projective) limit topology with respect to $h>0$,
supplied by the seminorms in \eqref{e2}. Finally if $\lbrace
K_j\rbrace_{j\geq 1}$ is an exhausted
sets of compact subsets of $\rr d$, then let
\begin{alignat*}{3}
\maclE _\sigma (\rr d) &= \projlim{j} \maclE _\sigma (K_j)
& \quad &\text{and} &\quad
\maclE _{0,\sigma}(\rr d) &= \projlim{j} \maclE _{0,\sigma}(K_j).
\intertext{In particular,}
\maclE _\sigma (\rr d) &=\underset{j\geq 1}\bigcap \maclE _\sigma (K_j)
& \quad &\text{and} &\quad
\maclE _{0,\sigma}(\rr d) &= \underset{j\geq 1}\bigcap \maclE _{0,\sigma}(K_j).
\end{alignat*}
It is clear that $\maclE _{0,0}(\rr d)$ contains all constant functions
on $\rr d$, and that $\maclE _{0}(\rr d)\setminus \maclE _{0,0}(\rr d)$
contains all non-constant trigonometric polynomials.

\par

\subsection{Ordered, dual and phase split bases}\label{subsec1.2}

\par

Our discussions involving Zak transforms, periodicity,
modulation spaces and Wiener spaces are done in terms of suitable bases.

\par

\begin{defn}\label{Def:OrdBasis}
Let $E = \{ e_1,\dots,e_d \}$ be an \emph{ordered} basis of
$\rr {d}$. Then $E'$ denotes the basis of $e_1',\dots,e_d'$ 
in $\rr {d}$ which satisfies
$$
\scal {e_j} {e'_k}=2\pi \delta_{jk}
\quad \text{for every}\quad
j,k =1,\dots, d.
$$
The corresponding lattices are given by
\begin{align*}
\Lambda _E
&=
\sets{n_1e_1+\cdots+n_de_d}{(n_1,\dots,n_d)\in \zz d},
\intertext{and}
\Lambda'_E
&=
\Lambda_{E'}=\sets{\nu _1e'_1+\cdots+\nu _de'_d}{(\nu _1,\dots,\nu _d)
\in \zz d}.
\end{align*}
The sets $E'$ and $\Lambda '_E$ are
called the dual basis and dual lattice of $E$ and $\Lambda _E$, respectively.
If $E_1,E_2$ are ordered bases of $\rr d$ such that a permutation of
$E_2$ is the dual basis for $E_1$, then the pair $(E_1,E_2)$ are called
\emph{permuted dual bases} (to each others on $\rr d$).
\end{defn}

\par

\begin{rem}\label{Rem:TEMap}
Evidently, if $E$ is the same as in Definition \ref{Def:OrdBasis},
then there is a matrix $T_E$ with $E$ as the image of
the standard basis in $\rr d$. Then $E'$ is the image of the standard basis
under the map $T_{E'}= 2\pi(T^{-1}_E)^t$.
\end{rem}

\par

Two ordered bases on $\rr d$ can be used to construct a uniquely defined
ordered basis for $\rr {2d}$ as in the following definition.

\par

\begin{defn}\label{Def:FromTwoToOneBasis}
Let $E_1,E_2$ be ordered bases of $\rr d$,
$$
V_1=\sets {(x,0)\in \rr {2d}}{x\in \rr d},
\quad
V_2=\sets {(0,\xi )\in \rr {2d}}{\xi \in \rr d}
$$
and let $\pi _j$ from $\rr {2d}$ to $\rr d$, $j=1,2$, be the projections
$$
\pi _1(x,\xi ) = x
\quad \text{and}\quad
\pi _2(x,\xi ) = \xi .
$$
Then $E_1\times E_2$ is the ordered basis $\{ e_1,\dots ,e_{2d}\}$
of $\rr {2d}$ such that
\begin{alignat*}{2}
\{ e_1,\dots ,e_d\} &\subseteq V_1,
&\qquad
E_1 &= \{ \pi _1(e_1),\dots ,\pi _1(e_d)\} ,
\\[1ex]
\{ e_{d+1},\dots ,e_{2d}\}
&\subseteq V_2 &
\quad \text{and}\quad
E_2 &= \{ \pi _2(e_{d+1}),\dots ,\pi _2(e_{2d})\} .
\end{alignat*}
\end{defn}

\par

In the phase space it is convenient to consider phase split bases, which
are defined as follows.

\par

\begin{defn}\label{Def:Phasesplit}
Let $V_1$, $V_2$, $\pi _1$ and $\pi _2$ be as in Definition
\ref{Def:FromTwoToOneBasis}, $E$ be an ordered basis of
the phase space $\rr {2d}$ and let
$E_0\subseteq E$. Then
$E$ is called \emph{phase split} (with respect to $E_0$),
if the following is true:
\begin{enumerate}
\item the span of $E_0$ and $E\setminus E_0$ equal
$V_1$ and $V_2$, respectively;

\vrum

\item let $E_1=\pi _1(E_0)$ and $E_2=
\pi _2(E\setminus E_0)$ be the bases in $\rr d$ which preserves the
orders from $E_0$ and $E\setminus E_0$. Then $(E_1,E_2)$
are permuted dual bases.
\end{enumerate}
If $E$ is a phase split basis with respect to $E_0$ and that $E_0$ consists of
the first $d$ vectors in $E$, then $E$ is called \emph{strongly phase split}
(with respect to $E_0$).
\end{defn}

\par

In Definition \ref{Def:Phasesplit} it is understood that the orderings of
$E_0$ and $E\setminus E_0$
are inherited from the ordering in $E$.

\par

\begin{rem}\label{Rem:CommentPhasSplit}
Let $E$ and $E_j$, $j=0,1,2$ be the same as in Definition \ref{Def:Phasesplit}.
It is evident that $E_0$ and $E\setminus E_0$  consist of $d$ elements, and
that $E_1$ and $E_2$ are uniquely defined. The pair $(E_1,E_2)$ is called
\emph{the pair of permuted dual bases, induced by $E$ and $E_0$}.

\par

On the other hand, suppose that $(E_1,E_2)$ is a pair of permuted dual bases to each
others on $\rr d$. Then it is clear that for
$E_1\times E_2=\{ e_1,\dots ,e_{2d}\}$ in Definition \ref{Def:FromTwoToOneBasis}
and $E_0=\{ e_1,\dots ,e_d\}$, we have that
$E_0$ and $E$ fullfils all properties in Definition \ref{Def:Phasesplit}. In this case,
$E_1\times E_2$ is called the phase split basis (of $\rr {2d}$) induced by $(E_1,E_2)$.

\par

It follows that if $E'$, $E_1'$ and $E_2'$ are the dual bases of $E$, $E_1$ and $E_2$,
repsectively, then $E'=E_1'\times E_2'$.
\end{rem}

\par

\subsection{Invariant quasi-Banach spaces and spaces of
mixed quasi-normed spaces of Lebesgue types}\label{subsec1.3}

\par

We recall that a quasi-norm $\nm {\cdo}{\mascB}$ of order $r \in (0,1]$ on the
vector-space $\mascB$ over $\mathbf C$ is a nonnegative functional on
$\mascB$ which satisfies
\begin{alignat}{2}
 \nm {f+g}{\mascB} &\le 2^{\frac 1r-1}(\nm {f}{\mascB} + \nm {g}{\mascB}), &
\quad f,g &\in \mascB ,
\label{Eq:WeakTriangle1}
\\[1ex]
\nm {\alpha \cdot f}{\mascB} &= |\alpha| \cdot \nm f{\mascB},
& \quad \alpha &\in \mathbf{C},
\quad  f \in \mascB
\notag
\intertext{and}
   \nm f {\mascB} &= 0\quad  \Leftrightarrow \quad f=0. & &
\notag
\end{alignat}
The space $\mascB$ is then called a quasi-norm space. A complete
quasi-norm space is called a quasi-Banach space. If $\mascB$
is a quasi-Banach space with
quasi-norm satisfying \eqref{Eq:WeakTriangle1}
then by \cite{Aik,Rol} there is an equivalent quasi-norm to $\nm \cdo {\mascB}$
which additionally satisfies
\begin{align}\label{Eq:WeakTriangle2}
\nm {f+g}{\mascB}^r \le \nm {f}{\mascB}^r + \nm {g}{\mascB}^r, 
\quad f,g \in \mascB .
\end{align}
From now on we always assume that the quasi-norm of the quasi-Banach space $\mascB$
is chosen in such way that both \eqref{Eq:WeakTriangle1} and \eqref{Eq:WeakTriangle2}
hold.

\par

Before giving the definition of $v$-invariant spaces, we recall some facts on weight
functions.

\par

A \emph{weight} or \emph{weight function} on $\rr d$ is a
positive function $\omega
\in  L^\infty _{loc}(\rr d)$ such that $1/\omega \in  L^\infty _{loc}(\rr d)$.
The weight $\omega$ is called \emph{moderate},
if there is a positive weight $v$ on $\rr d$ such that
\begin{equation}\label{moderate}
\omega (x+y) \lesssim \omega (x)v(y),\qquad x,y\in \rr d.
\end{equation}
If $\omega$ and $v$ are weights on $\rr d$ such that
\eqref{moderate} holds, then $\omega$ is also called
\emph{$v$-moderate}.
We note that \eqref{moderate} implies that $\omega$ fulfills
the estimates
\begin{equation}\label{moderateconseq}
v(-x)^{-1}\lesssim \omega (x)\lesssim v(x),\quad x\in \rr d.
\end{equation}
We let $\mascP _E(\rr d)$ be the set of all moderate weights on $\rr d$.

\par

It can be proved that if $\omega \in \mascP _E(\rr d)$, then
$\omega$ is $v$-moderate for some $v(x) = e^{r|x|}$, provided the
positive constant $r$ is large enough (cf. \cite{Gc2.5}). In particular,
\eqref{moderateconseq} shows that for any $\omega \in \mascP
_E(\rr d)$, there is a constant $r>0$ such that
$$
e^{-r|x|}\lesssim \omega (x)\lesssim e^{r|x|},\quad x\in \rr d.
$$

\par

We say that $v$ is
\emph{submultiplicative} if $v$ is even and \eqref{moderate}
holds with $\omega =v$. In the sequel, $v$ and $v_j$ for
$j\ge 0$, always stand for submultiplicative weights if
nothing else is stated. The next definition is similar to \cite[Section 3]{FG1}
in the Banach space case.

\par

\begin{defn}\label{Def:InvSpaces}
Let $r\in (0,1]$, $v\in \mascP _E(\rr {d})$ and let $\mascB = \mascB (\rr d)
\subseteq L^r_{loc}(\rr {d})$ be a quasi-Banach space such that $\Sigma _1
(\rr d)\subseteq \mascB (\rr d)$. Then $\mascB$ is called $v$-invariant on
$\rr d$ if the following is true:
\begin{enumerate}
\item $x\mapsto f(x+y)$ belongs to $\mascB$ for every $f\in \mascB$
and $y\in \rr {d}$.

\vrum

\item There is a constant $C>0$ such that $\nm {f_1}{\mascB}\le C\nm {f_2}{\mascB}$
when $f_1,f_2\in \mascB$ are such that $|f_1|\le |f_2|$. Moreover,
$$
\nm {f(\cdo +y)}{\mascB}\lesssim \nm {f}{\mascB}v(y),\qquad
f\in \mascB ,\ y\in \rr {d}.
$$
\end{enumerate}
\end{defn}

\par

Let $\mascB$ be as in Definition \ref{Def:InvSpaces}, $E$ be a basis for
$\rr d$ and let $\kappa (E)$ be the closed parallelepiped spanned by $E$.
The \emph{discrete version, $\ell _{\mascB ,E} = \ell _{\mascB ,E}(\Lambda _E)$,
of $\mascB$} with respect to $E$
is the set of all $a\in \ell _0'(\Lambda _E)$ such that
$$
\nm a{\ell _{\mascB ,E}} \equiv \Nm {\sum _{j\in \Lambda _E}a(j)\chi _{j+\kappa (E)}}{\mascB}
$$
is finite.

\par

An important example on $v$-invariant spaces concerns mixed quasi-norm
spaces of Lebesgue type, given in the following definition.

\par

\begin{defn}\label{Def:DiscLebSpaces}
Let $E = \{  e_1,\dots ,e_d \}$ be an ordered basis of $\rr d$, $\kappa (E)$ be the
parallelepiped spanned by $E$, $\omega \in \mascP _E(\rr d)$
$\mabfq =(q_1,\dots ,q_d)\in (0,\infty ]^{d}$ and $r=\min (1,\mabfq )$.
If  $f\in L^r_{loc}(\rr d)$, then
$$
\nm f{L^{\mabfq }_{E,(\omega )}}\equiv
\nm {g_{d-1}}{L^{q_{d}}(\mathbf R)}
$$
where  $g_k\, :\, \rr {d-k}\to \mathbf R$,
$k=0,\dots ,d-1$, are inductively defined as
\begin{align*}
g_0(x_1,\dots ,x_{d})
&\equiv
|f(x_1e_1+\cdots +x_{d}e_d)\omega (x_1e_1+\cdots +x_{d}e_d)|,
\\[1ex]
\intertext{and}
g_k(\mabfz _k) &\equiv
\nm {g_{k-1}(\cdo ,\mabfz_k)}{L^{q_k}(\mathbf R)},
\quad \mabfz _k\in \rr {d-k},\ k=1,\dots ,d-1.
\end{align*}
If $\Omega \subseteq \rr d$ is measurable,
then $L^{\mabfq }_{E,(\omega )}(\Omega )$ consists
of all $f\in L^r_{loc}(\Omega )$ with finite quasi-norm
$$
\nm f{L^{\mabfq}_{E,(\omega )}(\Omega )}
\equiv
\nm {f_\Omega }{L^{\mabfq}_{E,(\omega )}(\rr d)},
\qquad
f_\Omega (x)
\equiv
\begin{cases}
f(x), &\text{when}\ x\in \Omega
\\[1ex]
0, &\text{when}\ x\notin \Omega .
\end{cases}
$$
The space $L^{\mabfq }_{E,(\omega )}(\Omega )$ is called 
\emph{$E$-split Lebesgue space (with respect to $\omega$, $\mabfq$
and $\Omega$)}.
\end{defn}

\par

We let $\ell ^\mabfp _{E,(\omega )}(\Lambda _E)$ be the discrete
version of $\mascB = L^\mabfp _{E,(\omega )}(\rr d)$ when
$\mabfp \in (0,\infty ]^d$.

\par

Suppose that $E$ and $\Lambda$ are the same as in Definition
\ref{Def:DiscLebSpaces}. Then we let $(\ell _E^0)'(\Lambda )$
be the set of
all formal sequences $\{ a(j)\} _{j\in \Lambda}$, and we let
$\ell _E^0(\Lambda )$
be the set of all such sequences such that at most finite
numbers of $a(j)$ are
non-zero.

\par

\begin{rem}\label{Rem:LebExpIdent}
Evidently, $L^{\mabfq}_{E,(\omega )} (\Omega )$ and
$\ell ^{\mabfq}_{E,(\omega )} (\Lambda )$
in Definition \ref{Def:DiscLebSpaces} are quasi-Banach spaces of order
$\min (\mabfp,1)$. We set
$$
L^{\mabfq}_{E} = L^{\mabfq}_{E,(\omega )}
\quad \text{and}\quad
\ell ^{\mabfq}_{E} = \ell ^{\mabfq}_{E,(\omega )}
$$
when $\omega =1$. For conveniency we identify
$\mabfq = (q,\dots ,q)\in (0,\infty ]^d$
with $q\in (0,\infty ]$ when considering spaces involving
Lebesgue exponents. In particular,
\begin{alignat*}{5}
L^{q}_{E,(\omega )} &=  L^{\mabfq}_{E,(\omega )},
&\quad
L^{q}_{E} &= L^{\mabfq}_{E},
&\quad
\ell ^{q}_{E,(\omega )} &= \ell ^{\mabfq}_{E,(\omega )}
&\quad &\text{and} &\quad 
\ell ^{q}_{E} &= \ell ^{\mabfq}_{E}
\intertext{for such $\mabfq$, and notice that these spaces agree with}
&{\phantom =}L^q_{(\omega )}, &\qquad
&{\phantom =}L^q, &\qquad
&{\phantom =}\ell ^{q}_{(\omega )}
&\quad &\text{and}&\quad
&{\phantom =}\ell ^{q},
\end{alignat*}
respectively, with equivalent quasi-norms.
\end{rem}

\par

\subsection{Modulation and Wiener spaces}\label{subsec1.4}

\par

We consider a general class of modulation spaces
given in the following definition (cf. \cite{Fei5}).

\par

\begin{defn}\label{Def:ModSpaces}
Let $\omega ,v\in \mascP _E(\rr {2d})$ be such that $\omega$ is $v$-moderate,
$\mascB$ be a $v$-invariant quasi-Banach space on
$\rr {2d}$, and let $\phi \in \maclS _{1/2}(\rr d)\setminus 0$.
Then the \emph{modulation space} $M(\omega ,\mascB)$ consists of all $f\in \maclS _{1/2}'
(\rr d)$ such that
\begin{equation}\label{Eq:ModSpNorm}
\nm f{M(\omega ,\mascB)} \equiv \nm {V_\phi f \cdot \omega}{\mascB}
\end{equation}
is finite.
\end{defn}

\par

An important family of modulation spaces which contains
the classical modulation spaces, introduced by Feichtinger in
\cite{F1}, is given next.

\par

\begin{defn}\label{Def:ExtClassModSpaces}
Let $\mabfp ,\mabfq \in (0,\infty ]^d$, $E_1$ and $E_2$ be ordered bases
of $\rr d$, $E=E_1\times E_2$,
$\phi \in \Sigma _1(\rr d)\setminus 0$ and let
$\omega \in \mascP _E(\rr {2d})$.
For any $f\in \Sigma _1'(\rr d)$ set
\begin{multline*}
\nm f{M^{\mabfp ,\mabfq}_{E,(\omega )}}
\equiv
\nm {H_{1,f,E_1,\mabfp ,\omega }}{L^{\mabfq}_{E_2}},
\\[1ex]
\quad \text{where}\quad
H_{1, f,E_1,\mabfp ,\omega }(\xi )
\equiv
\nm {V_\phi f (\cdo ,\xi )\omega (\cdo ,\xi )}{L^{\mabfp}_{E_1}}
\end{multline*}
and
\begin{multline*}
\nm f{W^{\mabfp ,\mabfq}_{E,(\omega )}}
\equiv
\nm {H_{2,f,E_2,\mabfq ,\omega }}{L^{\mabfp}_{E_1}},
\\[1ex]
\quad \text{where}\quad
H_{2, f,E_2,\mabfq ,\omega }(x)
\equiv
\nm {V_\phi f (x,\cdo )\omega (x,\cdo )}{L^{\mabfq}_{E_2}}
\end{multline*}
The \emph{modulation space}
$M^{\mabfp ,\mabfq}_{E,(\omega )}(\rr d)$
($W^{\mabfp ,\mabfq}_{E,(\omega )}(\rr d)$)
consist of all $f\in \Sigma _1'(\rr d)$ such that
$\nm f{M^{\mabfp ,\mabfq}_{E,(\omega )}}$ 
($\nm f{W^{\mabfp ,\mabfq}_{E,(\omega )}}$) is finite.
\end{defn}

\par

The theory of modulation spaces has developed in different ways since they
were introduced in \cite{F1} by Feichtinger. (Cf. e.{\,}g. \cite{Fei5,GaSa,Gc2,Toft15}.)
For example, let $\mabfp$, $\mabfq$, $E$, $\omega$ and $v$ be the same
as in Definition \ref{Def:ModSpaces} and \ref{Def:ExtClassModSpaces},
and let $\mascB = L^{\mabfp ,\mabfq}_E(\rr {2d})$ and $r=\min (1,\mabfp ,\mabfq)$.
Then $M(\omega ,\mascB)=M^{\mabfp ,\mabfq}_{E,(\omega )}(\rr d)$
is a quasi-Banach space. Moreover, $f\in M^{\mabfp ,\mabfq}_{E,(\omega )}(\rr d)$
if and only if $V_\phi f \cdot \omega \in L^{\mabfp ,\mabfq}_{E}(\rr {2d})$,
and different choices of $\phi$ give rise to equivalent quasi-norms in
Definition \ref{Def:ExtClassModSpaces}.
We also note that for any such $\mascB$, then
$$
\Sigma _1(\rr d) \subseteq M^{\mabfp ,\mabfq}_{E,(\omega )}(\rr d) \subseteq \Sigma _1'(\rr d).
$$
Similar facts hold for the space $W^{\mabfp ,\mabfq}_{E,(\omega )}(\rr d)$.
(Cf. \cite{GaSa,Toft15}.)

\par

We shall consider various kinds of Wiener spaces involved later on when
finding different characterizations of modulation spaces. The following
type of Wiener spaces can essentially be found in e.{\,}g.
\cite{FG1,GaSa,Gc2}, and is related to coorbit spaces of Lebesgue spaces. 

\par

\begin{defn}\label{Def:WienerSpace}
Let $\mabfr \in (0,\infty ]^d$, 
$\omega _0\in \mascP _E(\rr d)$, $\omega \in \mascP _E(\rr {2d})$,
$\phi \in \Sigma _1(\rr d)\setminus 0$, 
$E\subseteq \rr d$ be an ordered basis, and let $\kappa (E)$ be the closed parallelepiped
spanned by $E$. Also let $\mascB = \mascB (\rr d)$ and $\mascB _0 = \mascB _0(\rr d)$
be invariant QBF-spaces on $\rr d$, $f$ and $F$ be measurable on $\rr d$ respective
$\rr {2d}$, $F_\omega =F\cdot \omega$, and let
$\ell _{\mascB ,E}(\Lambda _E)$ be the discrete version of $\mascB$ with respect to $E$.
\begin{enumerate}
\item Then $\nm f{\sfW ^{\mabfr}_{E}(\omega _0,\ell _{\mascB ,E} )}$ is given by
\begin{align*}
\nm f{\sfW ^{\mabfr}_{E}(\omega _0,\ell _{\mascB ,E} )}
&\equiv
\nm {h_{E,\omega _0,\mabfq ,f}}{\ell _{\mascB ,E}(\Lambda _E)},
\\[1ex]
h_{E,\omega _0,\mabfq ,f} (j)
&=
\nm {f} {L_E^{\mabfr}(j+\kappa (E))}\omega _0(j),
\quad
j\in \Lambda _E.
\end{align*}
The set $\sfW ^{\mabfr}_{E}(\omega ,\ell _{\mascB ,E} )$ consists of all measurable
$f$ on $\rr d$ such that $\nm f{\sfW ^{\mabfr}_{E}(\omega _0,\ell _{\mascB ,E} )}<\infty$;

\vrum

\item Then $\nm F{\sfW ^{\mabfr}_{k,E}(\omega ,\ell _{\mascB ,E}\,,\mascB _0 )}$, $k=1,2$,
are given by
\begin{alignat*}{2}
\nm F{\sfW ^{\mabfr}_{1,E}(\omega ,\ell _{\mascB ,E}\,,\mascB _0 )}
&\equiv
\nm {\fy _{F,\omega ,\mabfr ,\mascB ,E}}{\mascB _0},&\quad
\fy _{F,\omega ,\mabfr ,\mascB ,E}(\xi )
&=
\nm {F_\omega (\cdo ,\xi )}{\sfW ^{\mabfr}_{E}(1,\ell _{\mascB ,E} )},
\intertext{and}
\nm F{\sfW ^{\mabfr}_{2,E}(\omega ,\ell _{\mascB ,E}\, ,\mascB _0 )}
&\equiv
\nm {\psi _{F,\omega ,\mascB _0}}{\sfW ^{\mabfr}_{E}
(1,\ell _{\mascB ,E} )},&
\qquad
\psi _{F,\omega ,\mascB _0}(x) &= \nm {F_\omega (x,\cdo )}{\mascB _0}.
\end{alignat*}
The set $\sfW ^{\mabfr}_{k,E}(\omega ,\ell _{\mascB ,E}\,,\mascB _0 )$ 
consists of all
measurable $F$ on $\rr {2d}$ such that
$\nm F{\sfW ^{\mabfr}_{k,E}
(\omega ,\ell _{\mascB ,E}\,,\mascB _0 )}<\infty$, $k=1,2$.
\end{enumerate}
\end{defn}

\par

The space $\sfW ^{\mabfr}_{E}(\omega _0,\ell _{\mascB ,E})$
in Definition
\ref{Def:WienerSpace} is essentially a Wiener amalgam space with
$L^{\mabfr}_E$ as local (quasi-)norm and $\mascB$ or
$\ell _{\mascB ,E}(\Lambda _E)$
as global component. They are also related to
coorbit spaces. (See \cite{Fe0,FG1,FG2,FeLu,Rau1,Rau2}.)

\par

In fact, $\sfW ^\infty (\omega _0,\ell ^{\mabfp})$ in Definition
\ref{Def:WienerSpace} (i.{\,}e. the case
$\mabfr = (\infty ,\dots ,\infty )$ and $E$ is the standard basis)
is the \emph{coorbit space} of
$L^{\mabfp}(\rr d)$ with weight $\omega _0$, and is
sometimes denoted by
$$
\Co (L^{\mabfp}_{(\omega _0)}(\rr d))\quad \text{or}\quad 
W(L_{(\omega _0)} ^{\mabfp}) = W(L_{(\omega)}
^{\mabfp}(\rr d)),
$$
in the literature (cf. \cite{Gc2,Rau1,Rau2}).

\par

\begin{rem}\label{Rem:PerWienerSpace}
Let $\mabfp$, $\omega _0$, $\omega$, $E$, $\mascB$,
$\mascB _0$, $f$ and $F$
be the same as in Definition \ref{Def:WienerSpace}.
Evidently, by using the fact that $\omega _0$ is
$v_0$-moderate for some $v_0$,
it follows that
\begin{align*}
\nm {f\cdot \omega _0}{\sfW ^{\mabfq}_{E}(1,\ell _{\mascB ,E})}
&\asymp
\nm {f}{\sfW ^{\mabfq}_{E}(\omega _0,\ell _{\mascB ,E})}
\intertext{and}
\nm {F\cdot \omega}
{\sfW ^{\mabfq}_{k,E}(1,\ell _{\mascB ,E}\, ,\mascB _0 )}
&=
\nm {F}{\sfW ^{\mabfq}_{k,E}(\omega ,\ell
_{\mascB ,E}\, ,\mascB _0)}
\end{align*}
for $k=1,2$. Furthermore,
\begin{alignat*}{1}
\sfW ^{\mabfq}_{1,E}(\omega ,\ell _{\mascB ,E}\, ,\mascB _0)
&=
\omega ^{-1}
\cdot
\sfW ^{\mabfq}_{E}(1,\ell _{\mascB ,E}\, ;\, \mascB _0)
\intertext{and}
\sfW ^{\mabfq}_{2,E}(\omega ,\ell _{\mascB ,E}\, ,\mascB _0)
&=
\omega ^{-1}
\cdot
\mascB _0 (\rr d\, ;\, \sfW ^{\mabfq}_{E}
(1,\ell _{\mascB ,E} ) ).
\end{alignat*}
Here and in what follows,
$\mascB (\rr {d}\, ;\, \mascB _0)
=
\mascB (\rr {d}\, ;\, \mascB _0(\rr {d_0}))$
is the set of all functions $g$ in $\mascB $ with values in
$\mascB _0$, which are equipped with the quasi-norm
$$
\nm g{\mascB (\rr {d};\mascB _0)}\equiv \nm {g_0}{\mascB},
\qquad g_0(x) \equiv \nm {g(x)}{\mascB _0},
$$
when $\mascB (\rr {d})$ and $\mascB _0(\rr {d_0})$ are
invariant QBF-spaces.
\end{rem}

\par

Later on we discuss periodicity in the framework of certain
modulation spaces which are related to spaces which are defined
by imposing $L^\infty$-conditions on the configuration
variable of corresponding short-time Fourier transforms.

\par

\begin{defn}\label{Def:AltModSpace}
Let $E$, $\mabfr$, $\mascB _0$ and $\omega \in \mascP _E(\rr {2d})$
be the same as in Definition \ref{Def:WienerSpace}, and let $\phi \in
\Sigma _1(\rr d)\setminus 0$. Then
$\splM ^{\mabfr}_E(\omega ,\mascB _0)$ and
$\splW ^{\mabfr}_E(\omega ,\mascB _0)$)
are the sets of all $f\in \Sigma _1'(\rr d)$ such that
\begin{align*}
\nm f{\splM ^{\mabfr}_E(\omega ,\mascB _0)}
&\equiv
\nm {V_\phi f}{\sfW ^{\mabfr}_{1,E}(\omega ,\ell ^{\infty}_E,\mascB _0)}
\quad
\intertext{respectively}
\nm f{\splW ^{\mabfr}_E(\omega ,\mascB _0)}
&\equiv
\nm {V_\phi f}{\sfW ^{\mabfr}_{2,E}(\omega ,\ell ^{\infty}_E,\mascB _0)}
\end{align*}
are finite.
\end{defn}

\par

\begin{rem}\label{Rem:WienerSpace}
For the spaces in Definition \ref{Def:WienerSpace}
we set $\sfW ^{q_0,\mabfr _0} =  \sfW ^{\mabfr}$, when
\begin{alignat*}{2}
\mabfr _0 &=(r_1,\dots ,r_d)\in (0,\infty ]^d,&
\quad \text{and}\quad
\mabfr &= (q_0,\dots ,q_0,r_1,\dots ,r_d)\in (0,\infty ]^{2d},
\end{alignat*}
and similarly for other types of exponents and for the spaces
in Definitions \ref{Def:ExtClassModSpaces}
and \ref{Def:AltModSpace}. (See also Remark
\ref{Rem:LebExpIdent}.)
We also set
$$
M^{\infty ,\mabfq}_{E,(\omega )}
=
M^{\infty ,\mabfq}_{E_2,(\omega )}
\quad \text{and}\quad
W^{\infty,\mabfq}_{E,(\omega )}
=
W^{\infty,\mabfq}_{E_2,(\omega )}
$$
when $E_1,E_2$ are ordered bases of $\rr d$ and
$E=E_1\times E_2$, for spaces in Definition \ref{Def:ExtClassModSpaces},
since these spaces are independent of $E_1$.
\end{rem}

\par

In Section \ref{sec2} we prove that if $\mascB _0$ is an $E$-split
Lebesgue space on $\rr d$ and $\omega (x,\xi )\in \mascP _E(\rr {2d})$
which is constant with respect to the $x$ variable, then
$\splM ^{\mabfr}_E(\omega ,\mascB _0)$
and $\splW ^{\mabfr}_E(\omega ,\mascB _0)$ are independent of $\mabfr$
and agree with modulation spaces of the form in
Definition \ref{Def:ModSpaces}
(cf. Proposition \ref{Prop:SpecCaseWienerRel1}).

\par

The next result is a reformulation of \cite[Proposition 3.4]{Toft15}, and
indicates how Wiener spaces are connected to modulation spaces. The proof
is therefore omitted. Here, let
\begin{equation}\label{Eq:SubMultMod}
(\Theta _\rho v)(x,\xi )=v(x,\xi )\eabs {x,\xi}^\rho ,
\quad \text{where}\quad
\rho \ge 2d\left ( \frac 1r-1 \right ),
\end{equation}
for any submultiplicative $v\in \mascP _E(\rr {2d})$ and $r\in (0,1]$.
It follows that $L^1_{(\Theta _\rho v)}(\rr {2d})$ is
continuously embedded in
$L^r_{(v)}(\rr {2d})$, giving that $M^1_{(\Theta _\rho v)}(\rr d)
\subseteq M^r_{(v)}(\rr d)$. Hence if $\phi \in M^1_{(\Theta _\rho v)}
\setminus 0$, $\ep _0$ is chosen such that $S^\Lambda _{\phi ,\phi}$
is invertible on $M^1_{(\Theta _\rho v)}(\rr d)$ for every
$\Lambda =\ep \Lambda _E$, $\ep \in (0,\ep _0]$, it follows that both
$\phi$ and its canonical dual with respect to $\Lambda$ belong to
$M^r_{(v)}(\rr d)$. Notice that such $\ep _0>0$ exists in view of
\cite[Theorem S]{Gc1}.

\par

\begin{prop}\label{Prop:WienerEquiv}
Let $E$ be a phase split basis for $\rr {2d}$,
$\mabfp \in (0,\infty ]^{2d}$, $r=\min (1,\mabfp )$,
$\omega ,v\in \mascP _E(\rr {2d})$
be such that $\omega$ is $v$-moderate, $\rho$ and $\Theta _\rho v$ be
as in \eqref{Eq:SubMultMod} with strict inequality when $r<1$,
and let $\phi _1,\phi _2\in M^1_{(\Theta _\rho v)} (\rr d)\setminus 0$.
Then
$$
\nm f{M^{\mabfp}_{E,(\omega )}}
\asymp
\nm {V_{\phi _1} f}{L^\mabfp _{E,(\omega )}}
\asymp 
\nm {V_{\phi _2} f}{\sfW ^\infty _{E}
(\omega ,\ell ^\mabfp _{E})},\quad f\in
\maclS '_{1/2}(\rr d).
$$
In particular, if $f\in \maclS _{1/2}'(\rr d)$, then
$$
f\in M^{\mabfp}_{E,(\omega )}(\rr {2d})
\quad \Leftrightarrow \quad
V_{\phi _1} f \in L^\mabfp _{E,(\omega )}(\rr {2d})
\quad \Leftrightarrow \quad
V_{\phi _2} f \in \sfW ^\infty _{E}(\omega ,\ell ^\mabfp _{E}
(\Lambda _E)).
$$
\end{prop}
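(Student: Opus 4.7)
The plan is to establish the two displayed equivalences independently. The first equivalence $\nm f{M^{\mabfp}_{E,(\omega )}}\asymp \nm {V_{\phi _1} f}{L^\mabfp _{E,(\omega )}}$ is essentially window-independence of the modulation norm: the modulation space in Definition \ref{Def:ExtClassModSpaces} is defined with an arbitrary $\phi \in \Sigma _1(\rr d)\setminus 0$, and the claim is that one may replace $\phi$ by any $\phi _1\in M^1_{(\Theta _\rho v)}\setminus 0$ without changing the space (and with equivalent quasi-norms). So both assertions reduce to a single statement: for windows $\phi ,\phi _1,\phi _2 \in M^1_{(\Theta _\rho v)}(\rr d)\setminus 0$, the three quantities are equivalent. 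Throughout, I would first reduce to $\omega =1$ by transferring the weight onto the window via $v$-moderateness, since $|V_{\phi _1}f(x,\xi )|\omega (x,\xi ) \lesssim (|V_{\phi _1} f|\omega ) (x,\xi )$ and the weighted/unweighted spaces are related by multiplication by $\omega$.

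The central pointwise tool is the standard change-of-window identity, which on the symbolic level reads
\begin{equation*}
V_{\phi _1}f(z) = \scal {\phi _2}{\phi _1}^{-1}\iint V_{\phi _2}f(w)\, V_{\phi _1}\phi _2(z-w)\, e^{-i\Phi (z,w)}\, dw ,\qquad z=(x,\xi ),
\end{equation*}
so that after absolute values one obtains a pointwise convolution estimate
$$
|V_{\phi _1}f(z)| \lesssim \bigl ( |V_{\phi _2}f| * G_{12}\bigr )(z),
\qquad G_{12}(z) = |V_{\phi _2}\phi _1(-z)|,
$$
and analogously with the roles of $\phi _1,\phi _2$ swapped. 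For the Wiener estimate, given a cell $j+\kappa (E)$, I would bound the local $L^\infty$-supremum by enlarging the integration set: for any $z\in j+\kappa (E)$,
$$
|V_{\phi _2}f(z)| \lesssim \bigl ( |V_{\phi _1}f| * \widetilde G_{21}\bigr )(j),
\qquad \widetilde G_{21}(z) = \sup _{w\in z+\kappa (E)-\kappa (E)}|V_{\phi _1}\phi _2(w)|,
$$
so that the local sup becomes a convolution evaluated on the lattice $\Lambda _E$.

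The remaining task is to put these convolution estimates inside the quasi-norms $L^\mabfp _{E,(\omega )}$ and $\sfW ^\infty _E(\omega ,\ell ^\mabfp _{E})$. Here the key analytical obstacle is that when some components of $\mabfp$ are strictly smaller than $1$, the ordinary Young-type mixed-norm inequality fails. The replacement is a quasi-Banach convolution estimate of the form
$$
\nm {F*G}{L^\mabfp _{E,(\omega )}} \lesssim \nm F{L^\mabfp _{E,(\omega )}}\nm G{L^r _{E,(v)}},\qquad r=\min (1,\mabfp ),
$$
(and its $\sfW$-analogue on the lattice), which is precisely the semi-continuous convolution framework indicated in the introduction and worked out in \cite{GaSa,Toft15}. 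To apply it one needs $G_{12},\widetilde G_{21}\in L^r_{(v)}$. This is where the assumption $\phi _1,\phi _2 \in M^1_{(\Theta _\rho v)}$ with $\rho \ge 2d(1/r-1)$ (strict when $r<1$) is used: combining the definition with a Hölder estimate of the form
$$
\int |g|^r\, dz = \int (|g|\eabs z^\rho )^r \eabs z^{-\rho r}\, dz
\lesssim \Bigl (\int |g|\eabs z^\rho \, dz \Bigr )^r
$$
(Hölder with exponents $1/r$ and $1/(1-r)$, the second factor being finite under the strict inequality $\rho >2d(1/r-1)$) yields $L^1_{(\Theta _\rho v)}\hookrightarrow L^r_{(v)}$, hence $V_{\phi _j}\phi _k\in L^r_{(v)}$.

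The expected main obstacle is the \emph{mixed-norm} Young estimate in the quasi-Banach range: ordinary Minkowski fails componentwise once one component of $\mabfp$ drops below $1$, and an iterated $L^{p_k}$-argument must be replaced by the semi-continuous convolution procedure, which amounts to bounding $F*G$ by a localized $\ell ^r_E$-$L^\mabfp _E$ hybrid and then collapsing the local parameter using the envelope trick above. Once these quasi-Banach convolution inequalities are in place, composing them with the two pointwise change-of-window bounds yields the chain
$$
\nm {V_{\phi _1}f}{L^\mabfp _{E,(\omega )}}
\lesssim
\nm {V_{\phi _2}f}{\sfW ^\infty _E(\omega ,\ell ^\mabfp _E)}
\lesssim
\nm {V_{\phi _1}f}{L^\mabfp _{E,(\omega )}},
$$
and a direct comparison gives $\nm {V_{\phi _2}f}{L^\mabfp _{E,(\omega )}}\lesssim \nm {V_{\phi _2}f}{\sfW ^\infty _E(\omega ,\ell ^\mabfp _E)}$ by enlarging pointwise values to local suprema, closing the equivalence. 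The identification with $\nm f{M^\mabfp _{E,(\omega )}}$ then follows by taking any fixed $\phi \in \Sigma _1(\rr d)\setminus 0\subseteq M^1_{(\Theta _\rho v)}$ as a third window.
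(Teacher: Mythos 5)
There is a genuine gap, and it sits at the exact point the proposition is designed to overcome. The pivot of your argument is the displayed estimate $\nm {F*G}{L^{\mabfp} _{E,(\omega )}} \lesssim \nm F{L^{\mabfp} _{E,(\omega )}}\nm G{L^r _{E,(v)}}$ with $r=\min (1,\mabfp )$, which you describe as ``the semi-continuous convolution framework''. For the ordinary continuous convolution of two functions this inequality is \emph{false} once $\min (\mabfp )<1$: an $L^p$ function with $p<1$ need not be locally integrable, so $F*G$ can be identically $+\infty$ on a set of positive measure, and even for continuous $F$ a sequence of increasingly narrow bumps defeats the estimate. The semi-continuous convolution of \cite{GaSa,Toft15} and of Proposition \ref{Prop:SemiContConvEstNonPer} is $a*_{[E]}f=\sum _{j\in \Lambda _E}a(j)f(\cdo -j)$ with $a$ a \emph{sequence}; the bound $\nm {a*_{[E]}f}{L^{\mabfp}_{E,(\omega )}}\lesssim \nm a{\ell ^{\mabfr}_{E,(v)}}\nm f{L^{\mabfp}_{E,(\omega )}}$ works precisely because the discrete index absorbs the $r$-triangle inequality; it is not a statement about $F*G$ for two functions, and your argument never produces a sequence to put in the $\ell ^{\mabfr}$ slot.

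More concretely, the hard half of the claim is $\nm {V_{\phi _2}f}{\sfW ^\infty _{E}(\omega ,\ell ^{\mabfp} _{E})}\lesssim \nm {V_{\phi _1}f}{L^{\mabfp} _{E,(\omega )}}$, i.e.\ passing from $L^{\mabfp}$-control of an STFT to control of its local suprema. Your change-of-window bound $|V_{\phi _2}f|\lesssim |V_{\phi _1}f|*G_{21}$ cannot close this loop when $\min (\mabfp )<1$: bounding the local supremum of the right-hand side requires the local $L^1$ masses of $|V_{\phi _1}f|$ to lie in $\ell ^{\mabfp}$, which is not implied by $V_{\phi _1}f\in L^{\mabfp}$ when $\mabfp$ has components below $1$. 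The missing ingredient is a local reproducing estimate exploiting the analytic structure of the STFT --- Lemma \ref{SubharmonicEst}, $|V_\phi f(z_0)|\le C\nm {V_\phi f}{L^p(B_r(z_0))}$ for a Gaussian window, or equivalently the Gabor frame expansion \eqref{Eq:GaborExp}, which converts the change of window into a genuine semi-discrete convolution as in \eqref{Eq:STFTWindTrans}. This is exactly how the source the paper cites (\cite[Proposition 3.4]{Toft15}; the machinery is also visible in the proof of Proposition \ref{Prop:WienerEquiv}$'$) proceeds, and neither tool appears in your proposal. The remaining pieces of your argument are sound: the reduction to $\omega =1$, the embedding $L^1_{(\Theta _\rho v)}\hookrightarrow L^r_{(v)}$ via H{\"o}lder (which is indeed where the strict inequality on $\rho$ enters), and the easy inequality $\nm {V_{\phi _2}f}{L^{\mabfp} _{E,(\omega )}}\lesssim \nm {V_{\phi _2}f}{\sfW ^\infty _{E}(\omega ,\ell ^{\mabfp} _{E})}$ obtained by majorising pointwise values by local suprema.
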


\par

In Section \ref{sec2} we extend this result in such way
that we may replace
$\sfW ^\infty _{E}(\omega ,\ell ^\mabfp _{E})$
by $\sfW ^r _{E}(\omega ,\ell ^\mabfp _{E})$ for any $r>0$.

\par

\subsection{Classes of periodic elements}

\par

We consider
spaces of periodic Gevrey functions and their duals.

\par

Let $s,\sigma \in\mathbf{R}_{+}$ be such that $s+t\geq 1$,
$f\in (\mathcal S_{s}^{\sigma})'(\rr d)$, 
$E$ be a basis of $\rr d$ and let $E_0\subseteq E$.
Then $f$ is called \emph{$E_0$-periodic} if $f(x+y)=f(x)$ 
for every $x\in \rr d$ and $y\in E_0$.

\par

We note that for any $\Lambda _E$-periodic function
$f\in C^{\infty}(\rr d)$, we have 
\begin{align}
f &= \sum_{\alpha\in \Lambda ' _E} c(f,{\alpha})e^{i\scal \cdo \alpha},
\label{Eq:Expan2}
\intertext{where $c(f,{\alpha})$ are the Fourier coefficients given by}
c(f,{\alpha}) &\equiv \vert \kappa (E) \vert ^{-1}
(f,e^{i\scal \cdo \alpha})_{L^{2}(E)}.\notag
\end{align}

\par

For any $s\ge 0$ and basis $E\subseteq \rr d$ we let
$\maclE _{0,\sigma}^{E}(\rr d)$
and $\maclE _{\sigma}^{E}(\rr d)$ be the sets of all $E$-periodic 
elements
in $\maclE _{0,\sigma}(\rr d)$ and in $\maclE _{\sigma}(\rr d)$, respectively. Evidently,
$$
\maclE _\sigma^E(\rr d)\simeq \maclE _\sigma (\rr d/\Lambda _E)
\quad \text{and}\quad
\maclE _{0,\sigma}^E(\rr d)\simeq \maclE _{0,\sigma}(\rr d/\Lambda _E),
$$
which is a common approach in the literature.

\par

\begin{rem}
Let $E$ be an ordered basis on $\rr d$
and $V$ be a topological space of functions or (ultra-)distributions on
$\rr d$. Then we use the convention that $V^E$ ($E$ as upper case index) denotes
the $E$ periodic elements in $V$, while $V_E$ ($E$ as lower case index) is the
space analogous to $V$ when $E$ is used as basis.
\end{rem}

\par

Let $s,s_0,\sigma ,\sigma _0>0$ be such that $s+\sigma \ge 1$,
$s_0+\sigma _0\ge 1$ and $(s_0,\sigma _0)\neq (\frac 12,\frac 12)$. Then
we recall that the duals
$(\maclE _\sigma ^E)'(\rr d)$ and $(\maclE _{0,\sigma _0}^E)'(\rr d)$ of
$\maclE _\sigma ^E(\rr d)$ and $\maclE _{0,\sigma _0}^E(\rr d)$, respectively,
can be identified with the $E$-periodic elements in $(\maclS _s^\sigma)'(\rr d)$
and $(\Sigma _{s_0}^{\sigma _0})'(\rr d)$ respectively via unique extension of the form
\begin{align*}
(f,\phi)_{E} &= \sum_{\alpha \in \Lambda '_{E}} c(f,{\alpha})
\overline{c(\phi ,{\alpha})}
\end{align*}
on $\maclE _{0,\sigma _0}^E (\rr d)\times \maclE _{0,\sigma _0}^E (\rr d)$.
We also let $(\maclE _0^E)'(\rr d)$ be the set of all formal expansions
in \eqref{Eq:Expan2} and $\maclE _0^E(\rr d)$ be the set of
all formal expansions in \eqref{Eq:Expan2} such that at most finite
numbers of $c(f,{\alpha})$ are non-zero (cf. \cite{ToNa}). We refer to
\cite{Pil2,ToNa} for more characterizations of $\maclE _{\sigma}^E$,
$\maclE _{0,\sigma}^E$ and their duals.

\par

%
%
%
%

\par

The following definition takes care of spaces of formal
expansions \eqref{Eq:Expan2} with coefficients
obeying specific quasi-norm estimates.

\par

\begin{defn}\label{Def:PerQuasiBanachSpaces}
Let $E$ be a basis of $\rr d$,
$\mascB$ be a
quasi-Banach space  continuously embedded in $\ell _0' (\Lambda ' _E)$
and let $\omega _0$ be a weight on $\rr d$. Then
$\maclL ^E(\omega _0,\mascB )$ consists of all 
$f\in(\maclE_{0}^E)'(\rr d)$ such that 
\begin{equation*}
\nm f{\maclL ^E(\omega _0,\mascB)}
\equiv
\nm {\{ c(f,\alpha)\omega _0(\alpha) \}
_{\alpha \in \Lambda '_E}} {\mascB}
\end{equation*}
is finite.
\end{defn}

\par

If $\omega _0\in \mascP _E(\rr d)$ and $\omega (x,\xi )=\omega _0(\xi )$,
then
\begin{align}
\begin{aligned}\label{Eq:ModPerNormEquiv1}
\nm f{\splM ^{\mabfr}_E (\omega ,\mascB )}
&=
\nm {g\omega _0}{\mascB},
\\[1ex]
\text{when} \quad
g(\xi ) &= \nm {V_\phi f(\cdo ,\xi)}{L^{\mabfr}_E(\kappa (E))},
\quad
f\in (\maclE _0^E)'(\rr d),
\end{aligned}
\intertext{and}
\begin{aligned}\label{Eq:ModPerNormEquiv2}
\nm f{\splW ^{\mabfr}_E (\omega ,\mascB )}
&=
\nm h{L^{\mabfr}_E(\kappa (E))},
\\[1ex]
\text{when} \quad
h(x) &= \nm {V_\phi f(x,\cdo )\omega _0}{\mascB},
\quad
\phantom{{}_{L^{\mabfr}_E(\kappa (E))}}
\quad f\in (\maclE _0^E)'(\rr d),
\end{aligned}
\end{align}
because the $E$-periodicity of $x\mapsto |V_\phi f(x,\xi )|$
when $f$ is $E$ periodic gives

\begin{equation}\label{Eq:AltFormPerNorms}
\begin{aligned}
g(\xi ) &= \nm {V_\phi f(\cdo ,\xi)}{L^{\mabfr}_E(\kappa (E))}
=
\nm {V_\phi f(\cdo ,\xi)}{L^{\mabfr}_E(x+\kappa (E))},
\\[1ex]
\nm h{L^{\mabfr}_E(\kappa (E))}
&=
\nm h{L^{\mabfr}_E(x+\kappa (E))},
\qquad \qquad \qquad \qquad \qquad x\in \rr d.
\end{aligned}
\end{equation}

\par

\begin{prop}\label{Prop:PerMod}
Let $E$ be a basis of $\rr d$, $r\in(0,1]$,
$\mascB \subseteq L^{r}_{loc}(\rr d)$ be an $E'$-split
Lebesgue space, $\ell _{\mascB ,E}(\Lambda _E)$ be its discrete version,
$\omega _0\in \mascP _E(\rr d)$ and let $\omega (x,\xi )=\omega _0(\xi)$
when $x,\xi \in \rr d$. Then
\begin{equation*}
\maclL ^E(\omega _0,\ell _{\mascB ,E})
=
\splM^{\infty}_{E}(\omega ,\mascB )\bigcap (\maclE _0^E)'(\rr d)
=
\splW^{\infty}_{E}(\omega ,\mascB )\bigcap (\maclE _0^E)'(\rr d).
\end{equation*}
\end{prop}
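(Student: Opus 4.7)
The plan is to identify both $\splM^\infty_E(\omega,\mascB)\cap(\maclE_0^E)'(\rr d)$ and $\splW^\infty_E(\omega,\mascB)\cap(\maclE_0^E)'(\rr d)$ with $\maclL^E(\omega_0,\ell_{\mascB,E})$ by extracting a discrete quasi-norm on the Fourier coefficients $c(f,\alpha)$ directly from the short-time Fourier transform. Since $\omega$ depends only on $\xi$ and $\mascB$ is an $E'$-split Lebesgue space, the equivalence of $\splM^\infty_E$ and $\splW^\infty_E$ in this setting is exactly what Proposition \ref{Prop:SpecCaseWienerRel1} will supply, so it is enough to match, say, $\splM$ with $\maclL^E$. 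I will use the reformulation \eqref{Eq:ModPerNormEquiv1} with $\mabfr=\infty$, which -- since $|V_\phi f|$ is $\Lambda_E$-periodic in $x$ for periodic $f$ -- gives $\nm f{\splM^\infty_E(\omega,\mascB)}=\nm{g_f\omega_0}{\mascB}$, where $g_f(\xi)=\nm{V_\phi f(\cdo,\xi)}{L^\infty(\kappa(E))}$, for $f\in(\maclE_0^E)'(\rr d)$.

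Next I fix $\phi\in\Sigma_1(\rr d)\setminus 0$ of Gaussian type, so that $\widehat\phi>0$ on $\rr d$. For $f=\sum_{\alpha\in\Lambda'_E}c(f,\alpha)e^{i\scal{\cdo}{\alpha}}$ a direct computation yields
\[
V_\phi f(x,\xi)=e^{-i\scal{x}{\xi}}H_\xi(x),\qquad H_\xi(x)=\sum_{\alpha\in\Lambda'_E}c(f,\alpha)e^{i\scal{x}{\alpha}}\overline{\widehat\phi(\alpha-\xi)},
\]
where $H_\xi$ is $\Lambda_E$-periodic in $x$ and appears as a Fourier series on $\rr d/\Lambda_E$ with coefficients $c(f,\alpha)\overline{\widehat\phi(\alpha-\xi)}$. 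Extracting these coefficients by $|\kappa(E)|^{-1}\int_{\kappa(E)}H_\xi(x)e^{-i\scal{x}{\alpha}}\,dx$ gives the pointwise lower bound $|c(f,\alpha)|\cdot|\widehat\phi(\alpha-\xi)|\le g_f(\xi)$, while the triangle inequality gives the matching upper bound $g_f(\xi)\le\sum_\alpha|c(f,\alpha)|\cdot|\widehat\phi(\alpha-\xi)|$. Choosing $\phi$ so that $|\widehat\phi|\ge c_0>0$ on the compact set $-\kappa(E')$, the lower bound restricted to $\xi\in\alpha+\kappa(E')$ becomes $|c(f,\alpha)|\chi_{\alpha+\kappa(E')}(\xi)\le c_0^{-1}g_f(\xi)$; summing over $\alpha$ (whose cells tile $\rr d$) and using $v_0$-moderateness of $\omega_0$ to absorb the cell-wise difference between $\omega_0(\alpha)$ and $\omega_0(\xi)$, solidity of $\mascB$ yields $\nm{\{c(f,\alpha)\omega_0(\alpha)\}}{\ell_{\mascB,E}}\lesssim\nm{g_f\omega_0}{\mascB}$.

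For the converse bound, writing $\omega_0(\xi)\lesssim\omega_0(\alpha)v_0(\xi-\alpha)$ turns the pointwise upper bound into $g_f(\xi)\omega_0(\xi)\lesssim\sum_\alpha a_\alpha\Psi(\xi-\alpha)$ with $a_\alpha=|c(f,\alpha)|\omega_0(\alpha)$ and $\Psi(\eta)=|\widehat\phi(-\eta)|v_0(\eta)$, so that the claim reduces to the discrete-to-continuous estimate
\[
\Nm{\sum_{\alpha\in\Lambda'_E}a_\alpha\Psi(\cdo-\alpha)}{\mascB}\lesssim\nm{\{a_\alpha\}}{\ell_{\mascB,E}}.
\]
This is the step I expect to be the main obstacle: in the quasi-Banach regime $r<1$ the classical Minkowski-type convolution inequalities of \cite{F1,FG1,Gc2} fail, and the bound must be handled by the semi-continuous convolution machinery developed in Section \ref{sec2} along the lines of \cite{GaSa,Toft15}. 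The super-exponential decay of $\widehat\phi$ (since $\phi\in\Sigma_1$) relative to the exponential growth allowed for $v_0$ puts $\Psi$ into every Wiener amalgam space involved, so once the semi-continuous convolution estimate from Section \ref{sec2} is in hand the proof is complete.
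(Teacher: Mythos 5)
Your overall strategy -- write out $V_\phi f$ explicitly for a Fourier series, extract the coefficients for one inclusion, and control the other inclusion by a semi-discrete Young-type estimate -- is the natural one, and it matches how the paper treats this result: Proposition \ref{Prop:PerMod} is stated in the preliminaries as recalled material (essentially from \cite{ToNa}), with only the reductions \eqref{Eq:ModPerNormEquiv1}--\eqref{Eq:AltFormPerNorms} supplied, so there is no detailed in-paper proof to compare against. For the $\splM^{\infty}_E$ half your argument is essentially complete: the coefficient-extraction lower bound is correct, and the remaining estimate $\nm{\sum_\alpha a_\alpha\Psi(\cdo -\alpha)}{\mascB}\lesssim \nm {\{a_\alpha\}}{\ell _{\mascB ,E}}$ does hold, although note that it is not literally Proposition \ref{Prop:SemiContConvEstNonPer} (there the \emph{discrete} factor carries the small exponent $\mabfr$); what you need is its dual, obtained by the $r$-th power trick plus discrete Young exactly as in \eqref{Eq:F0FComputations}, using that $\Psi$ lies in $\sfW^{\infty}_{E'}(1,\ell^r)$ for every $r>0$. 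You should also say a word about window independence, since you prove the equivalence only for a Gaussian $\phi$.

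The genuine gap is the $\splW^{\infty}_E$ half. Proposition \ref{Prop:SpecCaseWienerRel1} does \emph{not} assert $\splM^{\mabfr}_{E_0}=\splW^{\mabfr}_{E_0}$: it identifies these with $M^{\infty,\mabfq}_{E,(\omega)}$ and $W^{\infty,\mabfq}_{E,(\omega)}$ respectively, which are different spaces in general; their coincidence on periodic elements is precisely the content of Proposition \ref{Prop:PeriodicMod}, which the paper \emph{derives from} Proposition \ref{Prop:PerMod}, so invoking it here is circular. Nor does your direct argument transfer: the $\splW^{\infty}_E$ quasi-norm is $\sup_x \nm{V_\phi f(x,\cdo )\omega_0}{\mascB}$ (the $L^\infty$ in $x$ sits \emph{outside} the $\mascB$-norm in $\xi$), while the coefficient extraction gives $|c(f,\alpha)\widehat\phi(\alpha-\xi)|\le |\kappa(E)|^{-1}\int_{\kappa(E)}|V_\phi f(x,\xi)|\,dx$, an $x$-average that you would then have to pull outside the $\mascB$-norm -- i.e. Minkowski's integral inequality, which fails for $r<1$. (The trivial direction $\sup_x\nm{F(x,\cdo)}{\mascB}\le\nm{\sup_x|F(x,\cdo)|}{\mascB}$ goes the wrong way.) The missing embedding $\splW^{\infty}_{E}(\omega ,\mascB )\cap(\maclE_0^E)'\subseteq \maclL^E(\omega_0,\ell_{\mascB,E})$ needs a separate argument; the intended route is through the discretised characterisation of Proposition \ref{Prop:WienerEquiv}$'$: for $E$-periodic $f$ the local quantity $G(j,\iota)=\nm{V_\phi f}{L^\infty((j,\iota)+\kappa(E))}$ is independent of $j$, so its $\ell^{\infty,\mabfq}$ and $\ell^{\mabfq,\infty}$ norms trivially coincide, giving $\nm f{M^{\infty,\mabfq}_{E,(\omega)}}\asymp\nm f{W^{\infty,\mabfq}_{E,(\omega)}}$ on $(\maclE_0^E)'(\rr d)$ and reducing the $\splW$ case to the $\splM$ case you have already handled.
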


\par

When proving that $\sfW ^{\mabfr} _{E}(\omega ,\ell ^\mabfp _{E})$
is independent of $\mabfr \in (0,\infty ]^d$ in Section \ref{sec2},
as announced earlier, it will at the same time follow that if
$\mascB$ is a suitable quasi-norm space of Lebesgue type, then
\begin{equation}\label{Eq:ModrumInvariants1}
\begin{alignedat}{3}
\splM ^{\mabfr _1}_E (\omega ,\mascB ) &= \splW ^{\mabfr _2}_E (\omega ,\mascB )
& \quad &\text{when} & \quad \omega &\in \mascP _E(\rr {2d})
\end{alignedat}
\end{equation}
for every $\mabfr _1,\mabfr _2\in (0,\infty ]^d$.

\par

\begin{rem}\label{Rem:PerDistr}
The link between periodic Gelfand-Shilov distributions and formal Fourier
series expansions is given by the formula
\begin{equation}\label{Eq:PerDistAction}
\scal f\phi = (2\pi )^{\frac d2}\sum _{\alpha \in \Lambda _E'}c(f,\alpha )
\widehat \phi (-\alpha ).
\end{equation}
\end{rem}

\par

\par

\section{Estimates on Wiener spaces and periodic elements
in modulation spaces}
\label{sec2}

\par

In this section we deduce equivalences between
various Wiener (quasi-)norm estimates on short-time
Fourier transforms. Especially we prove that \eqref{Eq:ModrumInvariants1}
holds for every $\mabfr _1,\mabfr _2\in (0,\infty ]^d$.


\par

%

\par

\subsection{Estimates of Wiener spaces}

\par

We begin with the following inclusions between
the different Wiener spaces in the previous section.

\par

\begin{prop}\label{Prop:WienerRel1}
Let $(E_1,E_2)$ be permuted dual bases of $\rr d$,
$E=E_1\times E_2$,
$\mabfp ,\mabfq ,\mabfr \in (0,\infty ]^d$ $r_1\in (0,\min (\mabfp ,\mabfq ,\mabfr )]$,
$r_2\in (0,\min (\mabfq )]$, and let $\omega _1,\omega _2
\in \mascP _E(\rr {2d})$ be such that
$$
\omega _1(x,\xi ) =
\omega _2(\xi ,x),\qquad x,\xi \in \rr d.
$$
Then
\begin{multline}\label{Eq:WienerRel1}
\sfW ^{\mabfr ,\infty}_E(\omega ,\ell
^{\mabfp ,\mabfq}_E(\Lambda _E))
\hookrightarrow
\sfW ^{\mabfr}_{1,E_1}(\omega
,\ell _{E_1}^{\mabfp}(\Lambda _{E_1}),L^{\mabfq}_{E_2}(\rr d))
\\[1ex]
\hookrightarrow
\sfW ^{r_1}_E(\omega ,\ell ^{\mabfp ,\mabfq}_E
(\Lambda _E))
\end{multline}
and
\begin{multline}\label{Eq:WienerRel2}
\sfW ^{\infty}_{E'}(\omega ,\ell
^{\mabfq ,\mabfp}_{E'}(\Lambda _{E}'))
\hookrightarrow
\sfW ^{r_2}_{2,E_2'}(\omega ,\ell ^{\mabfp}_{E_2'}(\Lambda _{E_2}'), L^{\mabfq}_{E_1'}(\rr d))
\\[1ex]
\hookrightarrow
\sfW ^{r_2}_{E'}(\omega ,\ell
^{\mabfq ,\mabfp}_{E'}(\Lambda _{E}')).
\end{multline}
\end{prop}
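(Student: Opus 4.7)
The plan is to unwind Definitions~\ref{Def:DiscLebSpaces} and~\ref{Def:WienerSpace} so that every quasi-norm in sight becomes an iteration of (mixed) Lebesgue and sequence quasi-norms on the cover of $\rr{2d}$ by translates of $\kappa (E)=\kappa (E_1)\times \kappa (E_2)$, and then to compare these iterations with two elementary facts that survive in the full quasi-Banach range: first, the local Lebesgue inclusion $\nm{g}{L^a(K)}\lesssim \nm{g}{L^b(K)}$ for $0<a\le b\le \infty$ on any bounded measurable $K$, iterated component by component for mixed norms; and second, the generalized Minkowski inequality
\[
\nm{j\mapsto \nm{f(\cdot ,j)}{L^r_\xi}}{\ell ^p_j}
\le
\nm{\xi\mapsto \nm{f(\xi ,\cdot )}{\ell ^p_j}}{L^r_\xi},\qquad 0<r\le p\le \infty ,
\]
which interchanges a sequence norm with an integral norm whenever the inner exponent is dominated by the outer. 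Both tools are valid throughout $(0,\infty ]$, so the argument never invokes convexity of the enveloping quasi-norms.

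\emph{First inclusion of \eqref{Eq:WienerRel1}.} Unfolding Definition~\ref{Def:DiscLebSpaces} shows that the left-hand quasi-norm is $\nm{a}{\ell ^{\mabfp ,\mabfq}_E}$ with
\[
a(j_1,j_2)=\esssup _{\xi \in j_2+\kappa (E_2)}\nm{F_\omega (\cdot ,\xi )}{L^{\mabfr}_{E_1}(j_1+\kappa (E_1))},
\]
while the middle one is $\nm{\fy}{L^{\mabfq}_{E_2}}$ with $\fy (\xi )=\nm{F_\omega (\cdot ,\xi )}{\sfW ^{\mabfr}_{E_1}(1,\ell ^{\mabfp}_{E_1})}$. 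For each fixed $\xi \in j_2+\kappa (E_2)$ one has $\nm{F_\omega (\cdot ,\xi )}{L^{\mabfr}_{E_1}(j_1+\kappa (E_1))}\le a(j_1,j_2)$; taking $\ell ^{\mabfp}_{E_1}$-norms in $j_1$ gives $\fy (\xi )\le \nm{a(\cdot ,j_2)}{\ell ^{\mabfp}_{E_1}}$ on the slab $j_2+\kappa (E_2)$. Hence $\fy$ is dominated by a step function whose $L^{\mabfq}_{E_2}$-norm equals, up to a constant depending only on $\kappa (E_2)$, the $\ell ^{\mabfq}_{E_2}$-norm of $j_2\mapsto \nm{a(\cdot ,j_2)}{\ell ^{\mabfp}_{E_1}}$, that is, the left-hand quasi-norm.

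\emph{Second inclusion of \eqref{Eq:WienerRel1}.} Set $c(j_1,j_2)=\nm{F_\omega}{L^{r_1}((j_1+\kappa (E_1))\times (j_2+\kappa (E_2)))}$. Because $r_1\le \mabfr$ componentwise, iterated Jensen in the $x$-variables of $L^{\mabfr}_{E_1}$ yields
\[
c(j_1,j_2)\lesssim \nm{\xi\mapsto \nm{F_\omega (\cdot ,\xi )}{L^{\mabfr}_{E_1}(j_1+\kappa (E_1))}}{L^{r_1}_\xi (j_2+\kappa (E_2))}.
\]
The generalized Minkowski inequality, applicable since $r_1\le \mabfp$, then pulls the $\ell ^{\mabfp}_{E_1}$-summation in $j_1$ inside the $L^{r_1}_\xi$-norm and produces $\nm{c(\cdot ,j_2)}{\ell ^{\mabfp}_{E_1}}\lesssim \nm{\fy}{L^{r_1}(j_2+\kappa (E_2))}$. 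Finally, the local inclusion $L^{\mabfq}_{E_2}(K)\subset L^{r_1}(K)$ on $K=j_2+\kappa (E_2)$ (legal because $r_1\le \mabfq$), combined with the standard amalgam identification that recombines local mixed-norm pieces into the global $L^{\mabfq}_{E_2}$-norm, yields $\nm{c}{\ell ^{\mabfp ,\mabfq}_E}\lesssim \nm{\fy}{L^{\mabfq}_{E_2}}$, as required.

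\emph{The chain \eqref{Eq:WienerRel2} and the main obstacle.} The second chain is the phase-space transpose of the first: after passing to the dual basis $E'=E_1'\times E_2'$, invoking the symmetry $\omega _1(x,\xi )=\omega _2(\xi ,x)$, and noting that the spaces of type $\sfW _2$ and $\sfW _1$ in Definition~\ref{Def:WienerSpace} differ precisely by interchanging the order in which the two phase-space variables are normed, the same Jensen plus generalized Minkowski scheme transfers verbatim, and the weaker hypothesis $r_2\le \min (\mabfq )$ suffices because only the $\xi$-side of the argument now performs the local $L^{r_2}$-to-$L^{\mabfq}$ conversion. The main source of friction is combinatorial rather than analytical: when $\mabfp$, $\mabfq$, $\mabfr$ are vector-valued the Minkowski swap has to be executed one coordinate at a time, and the hypotheses $r_1\le \min (\mabfp ,\mabfq ,\mabfr )$ and $r_2\le \min (\mabfq )$ are precisely what guarantee that every coordinatewise application is legal.
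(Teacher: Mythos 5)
Your proposal is correct and follows essentially the same route as the paper: step-function domination of the intermediate quasi-norm for the easy embeddings, and local H{\"o}lder/Jensen on the boxes $j+\kappa(E)$ combined with the generalized Minkowski inequality (executed coordinatewise, which is exactly where the hypotheses $r_1\le\min(\mabfp,\mabfq,\mabfr)$ and $r_2\le\min(\mabfq)$ enter) for the harder ones. The only point to flag is that the ``standard amalgam identification'' you invoke to recombine the local pieces into the global mixed norm is precisely the paper's Lemma \ref{Lemma:WienerNormEst2}, which in the vector-valued quasi-Banach setting is not immediate and is proved there by an induction over coordinates using Minkowski's and H{\"o}lder's inequalities.
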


%
%
%
%

\par

\begin{rem}\label{Rem:IncreasingWienerSpaces}
For the involved spaces in Proposition \ref{Prop:WienerRel1} it follows from
H{\"o}lder's inequality that
$$
\sfW ^{\mabfr}_{1,E_1}(\omega ,\ell ^{\mabfp}_{E_1}(\Lambda _{E_k}),L^{\mabfq}_{E_2}(\rr d)),
\quad
\sfW ^{\mabfr}_{2,E_2'}(\omega ,\ell ^{\mabfp}_{E_2'}(\Lambda _{E_2}'),L^{\mabfq}_{E_1'}(\rr d))
$$
and
$$
\sfW ^{\mabfr}_E(\omega ,\ell ^{\mabfp}_{E}(\Lambda _{E}))
$$
increase with respect to $\mabfp$ and decrease with respect to $\mabfr$.
%
\end{rem}

\par

%

\par

We need the following lemma for the proof of Proposition \ref{Prop:WienerRel1}.

\par

\begin{lemma}\label{Lemma:WienerNormEst2}
Let $\omega \in \mascP _E(\rr d)$, $E$ be an ordered
basis of $\rr d$,
$\kappa (E)$ the parallelepiped spanned
by $E$, $\mabfp \in (0,\infty ]^d$,
$r\in (0,\min (\mabfp )]$ and
let $f$ be measurable on $\rr d$. Then
\begin{equation}\label{Eq:WienerNormEst2}
\nm a{\ell ^{\mabfp}_E(\Lambda _E)}
\lesssim
\nm f{L^{\mabfp}_{E,(\omega )}},
\quad
a(j) = \nm f{L^r_E(j+\kappa (E))}\omega (j).
\end{equation}
\end{lemma}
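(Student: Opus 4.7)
The plan is to identify $\nm a{\ell^{\mabfp}_E(\Lambda_E)}$ with the $L^{\mabfp}_E$-norm of a piecewise constant function, dominate that function pointwise by a local $L^r$-averaging operator applied to $f\omega$, and then estimate the latter via Young's convolution inequality for mixed Lebesgue spaces.

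\emph{Absorbing the weight.} Since $\omega \in \mascP _E(\rr d)$ is $v$-moderate for some submultiplicative $v$, and $v$ is bounded on the compact set $\kappa (E)$, one has $\omega (j)\asymp \omega (x)$ uniformly in $j\in \Lambda _E$ and $x\in j+\kappa (E)$. Put $F:=|f|\omega$. Then $a(j)\asymp \nm F{L^r_E(j+\kappa (E))}$, and it is enough to show
\begin{equation*}
\nm {\widetilde a}{\ell ^{\mabfp}_E(\Lambda _E)}
\lesssim \nm F{L^{\mabfp}_E},
\qquad \text{where} \qquad
\widetilde a(j) := \nm F{L^r_E(j+\kappa (E))}.
\end{equation*}

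\emph{Step-function representation and pointwise bound.} By the definition of the discrete version of $L^{\mabfp}_E$, the left-hand side equals $\nm G{L^{\mabfp}_E}$ with $G(x)=\widetilde a(j_x)$ on the cell $j_x+\kappa (E)$. For such $x$ one has $j_x-x\in -\kappa (E)$, hence
\begin{equation*}
j_x+\kappa (E) \subseteq x+\bigl(\kappa (E)-\kappa (E)\bigr) =: x+\widehat{\kappa}(E),
\end{equation*}
where $\widehat{\kappa}(E)$ is the symmetric Minkowski difference. Monotonicity of the local $L^r$-norm in the domain yields the pointwise estimate $G(x) \leq M(x)$ with $M(x):=\nm F{L^r_E(x+\widehat{\kappa}(E))}$, and by symmetry of $\widehat{\kappa}(E)$ about the origin,
\begin{equation*}
M(x)^r
=\int F(y)^r\chi _{\widehat{\kappa}(E)}(x-y)\, dy
=\bigl(F^r\ast \chi _{\widehat{\kappa}(E)}\bigr)(x).
\end{equation*}

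\emph{Young in mixed spaces and conclusion.} Because $r\leq \min (\mabfp)$, every entry of $\mabfp /r$ lies in $[1,\infty ]$, so Young's convolution inequality in $L^{\mabfp /r}_E$ (obtained by iterating Minkowski's integral inequality along each coordinate of $E$) applies. Combined with the identity $\nm{H^r}{L^{\mabfp /r}_E}=\nm H{L^{\mabfp}_E}^{r}$, it gives
\begin{equation*}
\nm M{L^{\mabfp}_E}^{r}
=
\nm {F^r\ast \chi _{\widehat{\kappa}(E)}}{L^{\mabfp /r}_E}
\leq
|\widehat{\kappa}(E)|\cdot \nm F{L^{\mabfp}_E}^{r}.
\end{equation*}
Chaining the estimates yields
\begin{equation*}
\nm a{\ell ^{\mabfp}_E(\Lambda _E)}
\asymp \nm {\widetilde a}{\ell ^{\mabfp}_E(\Lambda _E)}
=\nm G{L^{\mabfp}_E}
\leq \nm M{L^{\mabfp}_E}
\lesssim \nm F{L^{\mabfp}_E}
=\nm f{L^{\mabfp}_{E,(\omega )}},
\end{equation*}
as required. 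The main obstacle is the mixed-norm Young inequality in the last display; it is standard, but the hypothesis $r\leq \min (\mabfp )$ enters essentially here, since the iterated Minkowski integral inequality requires every exponent in $\mabfp /r$ to be at least $1$. Everything else — the cell inclusion, the moderate-weight absorption, and the step-function rewriting — is bookkeeping.
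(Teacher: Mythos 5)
Your argument is correct, and it takes a genuinely different route from the paper's. The paper first reduces to the standard basis, $\omega =1$ and $r=1$ (the same normalization you perform implicitly through $\nm {H^r}{L^{\mabfp /r}_E}=\nm H{L^{\mabfp}_E}^r$), and then compares the discrete and continuous norms one coordinate at a time: it introduces the intermediate quantities $a_k(l)=\nm {g_k}{L^1(l+Q_{d-k})}$ and proves $\nm {a_k}{\ell ^{\mabfp _k}}\lesssim \nm {a_{k+1}}{\ell ^{\mabfp _{k+1}}}$ by Minkowski's integral inequality followed by H{\"o}lder/Jensen on each unit cell. You instead dominate the step function realizing $\nm {\widetilde a}{\ell ^{\mabfp}_E}$ pointwise by the sliding local mean $M=(F^r\ast \chi _{\widehat{\kappa}(E)})^{1/r}$ and finish with the mixed-norm Young inequality $L^{\mabfp /r}_E\ast L^1\to L^{\mabfp /r}_E$. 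Both proofs run on the same engine, iterated Minkowski, which is exactly where $r\le \min (\mabfp )$ enters in each; yours packages all $d$ coordinates into a single convolution estimate, which is shorter and makes the role of the hypothesis more transparent, while the paper's induction stays entirely within scalar Minkowski/H{\"o}lder steps. Two small points are worth making explicit in your write-up: the cells $j+\kappa (E)$ overlap only on a null set, so the step-function identification of the discrete quasi-norm is an a.e.\ equality; and for the constant exponent $r$ the mixed norm $L^r_E$ over a set coincides, up to the Jacobian of $T_E$, with the ordinary $L^r$ norm, which is what licenses writing $M(x)^r$ as a convolution.
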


\par

\begin{proof}
Let $f$ be measurable on $\rr d$, $g_k$ be the same as in
Definition \ref{Def:DiscLebSpaces},
$T_E$ be the linear map which maps the standard basis
into $E$,
$Q_k=[0,1]^k$, and let $\mabfp _k=(p_{k+1},\dots ,p_d)$,
when $k\ge 1$. Then
$$
\nm f{L^r_E(T_E(j)+\kappa (E))}\omega (j)
\asymp
\nm {f\cdot \omega }{L^r_E(T_E(j)+\kappa (E))}
\asymp
\nm {g_0}{L^r_E(j+Q_d)},\quad j\in \zz d.
$$
This reduce the situation to the case that $E$ is the
standard basis, $\kappa (E)=Q_d$ and $\omega =1$.
Moreover, by replacing $|f|^r$ with $f$ and $p_jr$
by $p_j$, $j=1,\dots ,d$, we may assume that $r=1$
(and that each $p_j\ge 1$).

\par

By induction it suffices to prove that if
\begin{align}
a_k(l) &= \nm {g_k}{L^1(l+Q_{d-k})},\quad l\in \zz {d-k},
\notag
\intertext{then}
\nm {a_k}{\ell ^{\mabfp _k}(\zz {d-k})}
&\lesssim
\nm {a_{k+1}}{\ell ^{\mabfp _{k+1}}(\zz {d-k-1})},\quad
k=0,\dots ,d-1,
\label{Eq:WienerNormEst2Ess}
\end{align}
since $\nm {a_0}{\ell ^{\mabfp _0}(\zz {d})}$ is equal to the
left-hand side of \eqref{Eq:WienerNormEst2}, and
$a_d=\nm {a_d}{\ell ^\infty (\zz 0)}$ is equal to
the right-hand side of \eqref{Eq:WienerNormEst2}.

\par

Let $m\in \zz {d-k-1}$ be fixed. We only prove \eqref{Eq:WienerNormEst2Ess}
in the case $p_{k+1}<\infty$. The case $p_{k+1}=\infty$ will follow by similar
arguments and is left for the reader. By first using Minkowski's inequality
and then H{\"o}lder's inequality we get
\begin{multline*}
\nm {a_k(\cdo ,m)}{\ell ^{p_{k+1}}(\mathbf Z)}
=
\left ( \sum _{l_1\in \mathbf Z}\nm {g_k}{L^1((l_1,m)+Q_{d-k})}^{p_{k+1}}
\right )^{\frac 1{p_{k+1}}}
\\[1ex]
=
\left ( \sum _{l_1\in \mathbf Z}
\left (
\int _{m+Q_{d-k-1}}
\left (
\int _{l_1+Q_1}
g_k(t,y)\, dt
\right )\, dy
\right )^{p_{k+1}}
\right )^{\frac 1{p_{k+1}}}
\\[1ex]
\le
\int _{m+Q_{d-k-1}}
\left (
\sum _{l_1\in \mathbf Z}
\left (
\int _{l_1+Q_1}
g_k(t,y)\, dt
\right )^{p_{k+1}}
\right )^{\frac 1{p_{k+1}}}
\, dy
\\[1ex]
\lesssim
\int _{m+Q_{d-k-1}}
\left (
\sum _{l_1\in \mathbf Z}
\int _{l_1+Q_1}
g_k(t,y)^{p_{k+1}}\, dt
\right )^{\frac 1{p_{k+1}}}
\, dy
\\[1ex]
=
\int _{m+Q_{d-k-1}}
\left (
\int _{\mathbf R}
g_k(t,y)^{p_{k+1}}\, dt
\right )^{\frac 1{p_{k+1}}}
\, dy
=
\int _{m+Q_{d-k-1}}
g_{k+1}(y)\, dy.
\end{multline*}
Hence,
\begin{equation}\label{Eq:WienerNormEst2Ess2}
\nm {a_k(\cdo ,m)}{\ell ^{p_{k+1}}(\mathbf Z)}
\lesssim
a_{k+1}(m),\qquad m\in \zz {d-k-1}.
\end{equation}
By applying the $\ell ^{\mabfp _{k+1}}(\zz {d-k-1})$-norm on
\eqref{Eq:WienerNormEst2Ess2} we get
\eqref{Eq:WienerNormEst2Ess}, and thereby
\eqref{Eq:WienerNormEst2}.
\end{proof}

\par

\begin{proof}[Proof of Proposition \ref{Prop:WienerRel1}]
Since the map $F\mapsto F\cdot \omega$ is homeomorphic between
the involved spaces and their corresponding non-weighted versions, we may assume
that $\omega _1=\omega _2=1$. Furthermore, by a linear change of variables,
we may assume that $E_1$ is the standard basis and $E_2=2\pi E_1$. Then
$\kappa (E_1)=Q_d$, $E_1'=E_2$ and $E_2'=E_1$.

\par

Let $F$ be measurable on $\rr {2d}$,
\begin{align*}
f_{1,\mabfr}(\xi ,j) &= \nm {F(\cdo ,\xi )}{L^{\mabfr}(j+\kappa (E_1))},
\quad
g_1(\xi ) = \nm {f_{1,\mabfr}(\xi ,\cdo )}{\ell ^{\mabfp}}
\intertext{and}
G_1(j,\iota ) &= \nm F{L^{\mabfr ,\infty}_{(j,\iota )+\kappa (E_1\times E_2)}}.
\end{align*}
Then
$$
g_1 \le g\equiv
\sum _{\iota +\Lambda _{E_2}}\big ( \nm g{L^\infty (\iota +2\pi Q)}\big ) \cdot
\chi _{\iota +2\pi Q},
$$
and
\begin{equation*} 
\nm F{\sfW ^{\mabfr}_{1,E_1}(1,\ell ^{\mabfp},L^{\mabfq})}
=
\nm {g_1}{L^{\mabfq}}
\le
\nm g{L^{\mabfq}}
=
\nm {G_1}{\ell ^{\mabfp ,\mabfq}}
\asymp
\nm F{\sfW ^{\mabfr ,\infty}_E (1,\ell ^{\mabfp ,\mabfq})}.
\end{equation*}
This implies that
$\sfW ^{\mabfr ,\infty}_E (1,\ell ^{\mabfp ,\mabfq})
\hookrightarrow
\sfW ^{\mabfr}_{1,E_1}(1,\ell ^{\mabfp},L^{\mabfq})$,
and the first inclusion in \eqref{Eq:WienerRel1} follows.

\par

In order to prove the second inclusion in
\eqref{Eq:WienerRel1}, we may assume
that $r_0<\infty$, since otherwise the result is trivial. Let
\begin{align*}
\quad 
\psi (\xi ) &= \nm {f_{1,r_0} (\xi ,\cdo )}{\ell ^{\mabfp}}, 
\quad
a(\iota ) = \nm \psi {L^{\mabfq}(\iota +\kappa (E_2))}
\intertext{and}
H_1(j,\iota ) &= \nm {f_{1,r_0} (\cdo ,j)}{L^{r_0}(\iota 
+\kappa (E_2))}.
\end{align*}
Then
$$
\nm \psi{L^{\mabfq} (\rr d)} 
=
\nm F{\sfW _{1,E_1}^{r_0}(1,\ell ^{\mabfp} ,L^{\mabfq})}
\quad \text{and}\quad
\nm {H_1}{\ell ^{\mabfp ,\mabfq}}
=
\nm F{\sfW _{E}^{r_0}(1,\ell ^{\mabfp ,\mabfq})}.
$$

\par

By Minkowski's inequality and the fact that
$\min (\mabfp )\ge r_0$ we get
\begin{multline*}
\nm {H_1(\cdo ,\iota )}{\ell ^{\mabfp}}
=
\Nm {\left ( \int _{\iota +\kappa (E_2)} f_{1,r_0}
(\xi ,\cdo )^{r_0}\, d\xi
\right )^{\frac 1{r_0}}}{\ell ^{\mabfp}}
\\[1ex]
=
\left (\Nm {\left ( \int _{\iota +\kappa (E_2)}
f_{1,r_0} (\xi ,\cdo )^{r_0}\, d\xi
\right )}{\ell ^{\mabfp /{r_0}}}\right ) ^{\frac 1{r_0}}
\\[1ex]
\le
\left (\int _{\iota +\kappa (E_2)}
\nm {f_{1,r_0} (\xi ,\cdo )^{r_0}}{\ell ^{\mabfp /{r_0}}}
\, d\xi \right ) ^{\frac 1{r_0}}
= a(\iota ).
\end{multline*}
Hence $\nm {H_1}{\ell ^{\mabfp ,\mabfq}}
\le \nm a{\ell ^{\mabfq}}$.
By Lemma \ref{Lemma:WienerNormEst2} it follows that
$\nm a{\ell ^{\mabfq}}\le \nm \psi{L^{\mabfq}}$, and the 
second inclusion
of \eqref{Eq:WienerRel1} follows by combining these relations.

\par

It remains to prove \eqref{Eq:WienerRel2}. Again we may assume that
$r_2<\infty$, since otherwise the result is trivial. Let
\begin{align*}
f_{2,q}(x,\iota ) &= \nm {F(x,\cdo )}{L^{q}(\iota +\kappa (E_2))},
\quad
f_3(x) = \nm {F(x,\cdo )}{L^{\mabfq}(\rr d)},
\\[1ex]
H_{2,q_1,q_2}(\iota ,j) &= \nm {f_{2,q_1}(\cdo ,\iota )}{L^{q_2} (j+\kappa (E_1))},
\quad \text{and}\quad H_{2,q}=H_{2,q,q}
\end{align*}
when $q,q_1,q_2\in (0,\infty ]$. Then the fact that $r_2\le \min (\mabfq)$, Minkowski's inequality and
Lemma \ref{Lemma:WienerNormEst2} give
\begin{multline*}
\nm {H_{2,r_2}(\cdo ,j)}{\ell ^{\mabfq}}
\le 
\left (
\int _{j+\kappa (E_1)}
\nm {f_{2,r_2}(x,\cdo )}{\ell ^{\mabfq}}^{r_2}\, dx
\right )^{\frac 1{r_2}}
\\[1ex]
\le 
\left (
\int _{j+\kappa (E_1)}
\nm {F(x,\cdo )}{L^{\mabfq}}^{r_2}\, dx
\right )^{\frac 1{r_2}}.
\end{multline*}
By applying the $\ell ^{\mabfp}$ norm on the latter inequality we get
$$
\nm F{\sfW ^{r_2}_{E'}(1,\ell ^{\mabfq ,\mabfp})}
\le
\nm F{\sfW ^{r_2}_{2,E_2}(1,\ell ^{\mabfp},L^{\mabfq})},
$$
and the second relation in \eqref{Eq:WienerRel2} follows.

\par

On the other hand,  we have
\begin{multline*}
\left (
\int _{j+\kappa (E_1)}
\nm {F(x,\cdo )}{L^{\mabfq}(\rr d)}^{r_2}
\right )^{\frac 1{r_2}}\, dx
\lesssim
\left (
\int _{j+\kappa (E_1)}
\nm {f_{2,\infty}(x,\cdo )}{\ell ^{\mabfq}}^{r_2}\, dx
\right )^{\frac 1{r_2}}
\\[1ex]
\le \nm {H_{2,\infty}(\cdo ,j)}{\ell ^{\mabfq}}
\end{multline*}
Again, by applying the $\ell ^{\mabfp}$ norm with respect to the $j$ variable,
we get
$$
\nm F{\sfW ^{r_2}_{2,E_2}(1,\ell ^{\mabfp},L^{\mabfq})}
\le
\nm F{\sfW ^{\infty}_{E'}(1,\ell ^{\mabfq ,\mabfp})},
$$
and the first relation in \eqref{Eq:WienerRel2} follows.
\end{proof}

\par

\subsection{Wiener estimates on short-time Fourier
transforms, and modulation spaces}

\par

Essential parts of our analysis
are based on Lebesgue estimates of the \emph{semi-discrete
convolution} with respect to (the ordered) basis $E$ in $\rr d$, given by
\begin{equation}\label{EqDistSemContConv}
(a*_{[E]}f)(x) \sum _{j\in \Lambda _E}a(j)f(x - j),
\end{equation}
when $f\in \maclS _{1/2}'(\rr d)$ and $a \in \ell _0 (\Lambda _E)$.

\par

The next result is an extension of
\cite[Proposition 2.1]{Toft15} and
\cite[Lemma 2.6]{GaSa}, but a special case of
\cite[Theorem 2.1]{Toft22}.
The proof is therefore omitted.
Here the domain of integration is of the form
\begin{equation}\label{Eq:IDomain1}
I=\sets {x_1e_1+\cdots +x_de_d}{x_k\in J_k},
\quad
J_k =
\begin{cases}
[0,1], & e_k\in E_0
\\[1ex]
\mathbf R, & e_k\notin E_0
\end{cases}
\end{equation}

\par

\begin{prop}\label{Prop:SemiContConvEstNonPer}
Let $E$ be an ordered basis
of $\rr d$, $E_0\subseteq E$, $I$ be given by
\eqref{Eq:IDomain1},
$\omega ,v\in \mascP _E(\rr d)$ be
such that $\omega$ is $v$-moderate, and let
$\mabfp ,\mabfr \in (0,\infty ]^{d}$ be such that
$$
r_k\le \min _{m\le k}(1,p_m).
$$
Also let  $f$ be measurable on $\rr d$ such that $|f|$ is $E_0$-periodic
and $f\in L^{\mabfp}_{E,(\omega )}(I)$.
Then the map $a\mapsto a*_{[E]}f$ from $\ell _0(\Lambda _E)$ to
$L^{\mabfp}_{E,(\omega )}(I)$ extends uniquely to a
linear and continuous map from $\ell ^{\mabfr}_{E ,(v)}(\Lambda _E)$ to
$L^{\mabfp}_{E,(\omega )}(I)$, and
\begin{equation}\label{convest1}
\nm {a*_{[E]}f}{L^{\mabfp }_{E,(\omega )}(I)}\le
C\nm {a}{\ell ^{\mabfr}_{E ,(v)}(\Lambda _E)}
\nm {f}{L^{\mabfp }_{E,(\omega )}(I)},
\end{equation}
for some constant $C>0$ which is independent of
$a\in \ell ^{\mabfr}_{E ,(v)}(\Lambda _E)$ and
measurable $f$ on $\rr d$ such that $|f|$ is $E_0$-periodic.
\end{prop}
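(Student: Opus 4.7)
The plan is to reduce to a canonical situation and then iterate one-dimensional estimates coordinate by coordinate. First, by replacing $x$ with $T_E^{-1}x$ one may assume that $E$ is the standard basis of $\rr d$, so that $\Lambda_E=\zz d$, $\kappa(E)=[0,1]^d$, and after reordering one may further assume $E_0$ consists of the first $d_0$ basis vectors, giving $I=[0,1]^{d_0}\times \rr{d-d_0}$. Next, by $v$-moderation of $\omega$,
$$
\omega(x)|(a*_{[E]}f)(x)| \le \sum_{j\in \zz d}(|a(j)|v(j))\cdot \bigl(\omega(x-j)|f(x-j)|\bigr),
$$
so with $\tilde a(j)=|a(j)|v(j)$ and $F(y)=|f(y)|\omega(y)$ it suffices to establish $\nm{\tilde a*_{[E]}F}{L^\mabfp_E(I)}\lesssim \nm{\tilde a}{\ell^\mabfr(\zz d)}\nm{F}{L^\mabfp_E(I)}$, interpreting the $L^\mabfp_E(I)$ norm of translates of $F$ via the $E_0$-periodicity of $|f|$ and the moderation bound on $\omega$.

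The core step is the pointwise domination $|(\tilde a*_{[E]}F)(x)|\le \sum_{j\in \zz d}\tilde a(j)F(x-j)$, on which I would run an induction over the coordinate index $k=1,\ldots,d$. At stage $k$, one peels off the innermost $L^{p_k}_{x_k}$-norm followed by summation in $j_k$: if $p_k\ge 1$, apply Minkowski's integral inequality to pull the $j_k$-sum outside $L^{p_k}_{x_k}$, while if $p_k<1$, apply the $p_k$-quasi-triangle inequality to the same effect, raising the current expression to the $p_k$-th power. In either case, the resulting $L^{p_k}_{x_k}$-norm of the translate of $F$ is independent of $j_k$: translation invariance handles the case $e_k\notin E_0$, where $J_k=\mathbf R$, while $E_0$-periodicity of $|f|$ (together with a moderation correction for the $\omega$ factor) handles the case $e_k\in E_0$, where $J_k=[0,1]$. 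The remaining sum over $j_k$ is then controlled by the $\ell^{r_k}_{j_k}$-norm through the embedding $\ell^{r_k}\hookrightarrow \ell^{\min(1,p_k)}$, valid because $r_k\le \min(1,p_k)$.

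Iterating across all coordinates accumulates powers from the nested quasi-triangle inequalities, and this is precisely where the full chain $r_k\le \min_{m\le k}(1,p_m)$ is used: after $k-1$ passes, the effective outer exponent on the $k$-th integration has compounded to $\min_{m<k}(1,p_m)$, so $r_k$ must respect both that accumulated exponent and $p_k$ itself in order for $\ell^{r_k}\hookrightarrow \ell^{\min(1,p_k)}$ to remain compatible with the preceding chain. Once all $d$ coordinates are processed, \eqref{convest1} follows on $a\in \ell_0(\Lambda_E)$, and the bilinear map extends uniquely by density to $\ell^\mabfr_{E,(v)}(\Lambda_E)$.

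The main obstacle lies in the bookkeeping of these nested quasi-triangle inequalities in the genuine quasi-Banach regime $\min(\mabfp)<1$, where convexity is lost and classical Banach space duality is unavailable. The combined use of translation invariance in the unbounded coordinates and of $E_0$-periodicity in the bounded coordinates is essential: if either were absent in some direction, the corresponding $L^{p_k}_{x_k}$-norm would depend nontrivially on $j_k$, the summation over $j_k$ would fail to factor through, and the induction would break down.
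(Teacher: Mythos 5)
The paper gives no proof of this proposition: it is quoted as a special case of Theorem 2.1 of the cited preprint on semi-continuous convolutions, so your argument can only be measured against the statement itself. Your overall strategy --- iterate one-dimensional Minkowski/quasi-triangle estimates coordinate by coordinate, use translation invariance of $L^{p_k}(\mathbf R)$ in the unbounded directions and $E_0$-periodicity in the bounded ones, and track the accumulated exponent $\min _{m\le k}(1,p_m)$ so that the hypothesis on $r_k$ is exactly what legitimises each $\ell ^{r_k}$-embedding and each interchange of discrete and continuous norms --- is the right one. One preliminary step is not admissible, though: ``after reordering one may assume $E_0$ consists of the first $d_0$ vectors'' changes the spaces, since both the mixed norm $L^{\mabfp}_E$ and the condition $r_k\le \min _{m\le k}(1,p_m)$ are sensitive to the order of the coordinates. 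This is harmless only because the induction never needs it; you can process the coordinates in the given order and branch on whether $e_k\in E_0$.

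More seriously, the weight bookkeeping in the periodic directions is wrong. Having replaced $a(j)$ by $\tilde a(j)=|a(j)|v(j)$ and $f$ by $F=|f|\omega$, you need $\nm {F(\cdo -j)}{L^{p_k}(J_k)}$ to be controlled by $\nm {F}{L^{p_k}(J_k)}$ uniformly in $j_k$. For $J_k=[0,1]$ this fails: $F$ is not $E_0$-periodic (only $|f|$ is), and the ``moderation correction'' you invoke produces an extra factor $v(j_ke_k)$, which, multiplied against the $v(j)$ already absorbed into $\tilde a$, yields a bound with weight roughly $v(j)\prod _{e_k\in E_0}v(j_ke_k)$ on $a$ --- strictly weaker than the claimed $\nm a{\ell ^{\mabfr}_{E,(v)}}$ whenever $v$ is unbounded. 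Indeed, the reduced inequality $\nm {\tilde a*_{[E]}F}{L^{\mabfp}_E(I)}\lesssim \nm {\tilde a}{\ell ^{\mabfr}_E}\nm F{L^{\mabfp}_E(I)}$ is simply false for general nonnegative $F$: take $d=1$, $I=[0,1]$, $F=\chi _{[10,11]}$ and $\tilde a=\delta _{-10}$, for which the left-hand side is $1$ and the right-hand side vanishes. The repair is to use periodicity \emph{before} moving the weight: for $x\in I$ write $j=j'+j''$ with $j'$ the component along $E_0$, so that $\omega (x)|f(x-j)|=\omega (x)|f(x-j'')|\lesssim v(j'')\, \omega (x-j'')|f(x-j'')|$. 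Then the bounded coordinates incur no weight factor at all, and only the unbounded coordinates are genuinely translated, where $L^{p_k}(\mathbf R)$ is translation invariant. With that correction, and with the structure ``$|f|$ periodic times a moderate weight'' carried through the induction rather than discarded at the outset, your coordinate-by-coordinate scheme goes through.
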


\par

We have now the following result, which agrees with
Proposition \ref{Prop:WienerEquiv} when
$\mabfr =( \infty ,\dots ,\infty )$.

\par

\renewcommand{\rubrik}{Proposition \ref{Prop:WienerEquiv}$'$}

\par

\begin{tom}
Let $E$ be a phase split basis for $\rr {2d}$, $\mabfp ,\mabfr \in (0,\infty ]^{2d}$,
$r\in (0,\min (1,\mabfp )]$, $\omega ,v\in \mascP _E(\rr {2d})$
be such that $\omega$ is $v$-moderate, $\rho$ and $\Theta _\rho v$ $\rho$ be
as in \eqref{Eq:SubMultMod} with strict inequality when $r<1$, and let $\phi _1,\phi _2\in
M^1_{(\Theta _\rho v)} (\rr d)\setminus 0$.
Then
$$
\nm f{M^{\mabfp}_{E,(\omega )}}
\asymp
\nm {V_{\phi _1} f}{L^\mabfp _{E,(\omega )}}
\asymp 
\nm {V_{\phi _2} f}{\sfW ^{\mabfr} _{E}(\omega ,\ell ^\mabfp _{E})},\quad f\in
\maclS '_{1/2}(\rr d).
$$
In particular, if $f\in \maclS _{1/2}'(\rr d)$, then
$$
f\in M^{\mabfp}_{E,(\omega )}(\rr {2d})
\quad \Leftrightarrow \quad
V_{\phi _1} f \in L^\mabfp _{E,(\omega )}(\rr {2d})
\quad \Leftrightarrow \quad
V_{\phi _2} f \in \sfW ^{\mabfr} _{E}(\omega ,\ell ^\mabfp _{E}
(\Lambda _E)).
$$
\end{tom}

\par

We need the following lemma for the proof.

\par

\begin{lemma}\label{SubharmonicEst}
Let $p\in (0,\infty ]$, $r>0$, $(x_0,\xi _0)\in \rr {2d}$ be fixed,
and let $\phi \in \maclS _{1/2}(\rr d)$ be a Gaussian. Then
$$
|V_\phi f(x_0,\xi _0)| \le C\nm {V_\phi f}{L^p(B_r(x_0,\xi _0))},\quad
f\in \maclS _{1/2}'(\rr d),
$$
where the constant $C$ is independent of $(x_0,\xi _0)$ and $f$.
\end{lemma}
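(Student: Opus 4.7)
The plan is to reduce the estimate to the case $(x_0,\xi_0)=0$ by exploiting the invariance of $|V_\phi f|$ under time-frequency shifts, and then to deduce the inequality at the origin from the submean property of plurisubharmonic functions applied to the entire-function factor of the STFT with Gaussian window. For the reduction, set $g(t)=e^{-i\scal t{\xi_0}}f(t+x_0)$. Since $\maclS_{1/2}'(\rr d)$ is invariant under translations and modulations, $g\in\maclS_{1/2}'(\rr d)$, and a straightforward change of variables in the defining integral of the STFT shows $|V_\phi g(x,\xi)|=|V_\phi f(x+x_0,\xi+\xi_0)|$. In particular $|V_\phi g(0,0)|=|V_\phi f(x_0,\xi_0)|$ and $\nm{V_\phi g}{L^p(B_r(0))}=\nm{V_\phi f}{L^p(B_r(x_0,\xi_0))}$, so it suffices to prove $|V_\phi h(0,0)|\le C\nm{V_\phi h}{L^p(B_r(0))}$ for every $h\in\maclS_{1/2}'(\rr d)$ with $C$ independent of $h$.

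After a dilation that only affects the value of $C$ we may take $\phi(t)=e^{-|t|^2/2}$. Completing the square in the STFT integral yields the Bargmann-type factorization
$$
V_\phi h(x,\xi)=e^{-|x|^2/2}F(x-i\xi),\qquad F(z)=\scal h{k_z},\quad k_z(t)=e^{-|t|^2/2}e^{\scal t{\bar z}},
$$
where $k_z\in\maclS_{1/2}(\rr d)$ with seminorms locally uniformly bounded in $z$ (being a translated and modulated Gaussian up to an explicit constant), so the pairing is well-defined and $F$ is entire on $\cc d\cong\rr{2d}$. Consequently $|F|^p$ is plurisubharmonic for every $p\in(0,\infty)$, and its submean-value inequality at the origin over the real $2d$-ball $B_r(0)$ gives
$$
|F(0)|^p\le|B_r(0)|^{-1}\int_{B_r(0)}|F(x-i\xi)|^p\,dx\,d\xi.
$$
Using $|V_\phi h(0,0)|=|F(0)|$ together with the uniform pointwise bound $|F(x-i\xi)|=e^{|x|^2/2}|V_\phi h(x,\xi)|\le e^{r^2/2}|V_\phi h(x,\xi)|$ throughout $B_r(0)$, the estimate follows with $C=e^{r^2/2}|B_r(0)|^{-1/p}$, independent of $(x_0,\xi_0)$ and $f$. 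The case $p=\infty$ is immediate from continuity of $V_\phi h$.

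The main obstacle I expect is the uniformity of $C$ in $(x_0,\xi_0)$. Applying the submean inequality directly at an arbitrary base point produces a factor of the form $\sup_{(x,\xi)\in B_r(x_0,\xi_0)}e^{(|x|^2-|x_0|^2)/2}$, which grows with $|x_0|$ and would ruin the desired uniform constant. The time-frequency-shift reduction in the first paragraph is precisely designed to circumvent this by moving the evaluation point to the origin, where the Gaussian weight is trivially controlled on $B_r$. The only other technical item is verifying that the Bargmann formula makes sense in the ultradistributional setting $\maclS_{1/2}'$, which reduces to the routine fact that $k_z$ lies in $\maclS_{1/2}(\rr d)$ with seminorms controlled by explicit functions of $z$.
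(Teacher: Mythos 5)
Your proof is correct and follows essentially the same route as the paper: the paper likewise reduces to a centered Gaussian window and then invokes the entire-function factorization of the Gaussian-window STFT together with the sub-mean-value property of $|F|^p$ for the entire factor (via \cite[Lemma 2.3]{GaSa}). The only real difference is how uniformity in the base point is secured -- you time-frequency shift $f$ so that $(x_0,\xi_0)$ moves to the origin, whereas the cited lemma keeps the base point $w$ general and absorbs the Gaussian weight into a $w$-dependent entire factor of the form $e^{c_2(z,w)+c_3|w|^2}$; both devices give the same uniform constant.
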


\par

When proving Lemma \ref{SubharmonicEst} we may first reduce ourself
to the case that the Gaussian $\phi$ should be centered at origin, by
straight-forward arguments involving pullbacks with translations.
The result then follows by using the same arguments as in
\cite[Lemma 2.3]{GaSa} and its proof, based on the fact that
$$
z\mapsto F_w(z) \equiv e^{c_1|z|^2+c_2(z,w)+c_3|w|^3}V_\phi f(x,\xi ),
\quad z=x+i\xi
$$
is an entire function for some choice of the constant
$c_1$ (depending on $\phi$).

\par

\begin{proof}[Proof of Proposition
\ref{Prop:WienerEquiv}${}^{\, \prime}$]
Let
$F=V_{\phi} f$, $F_0=V_{\phi _0} f$, $\kappa (E)$
be the (closed) parallelepiped which is spanned by
$E=\{ e_1,\dots ,e_{2d}\}$, and let
$$
\kappa _M(E) = \sets {x_1e_1+\cdots +x_{2d}e_{2d}}{|x_k|\le 2,\ k=1,\dots ,2d}.
$$
Also choose $r_0>0$ small enough such that
$$
\kappa (E)+B_{r_0}(0,0) \subseteq \kappa _M(E)
$$
The result holds when $\mabfr =(\infty ,\dots ,\infty )$, in view of Proposition
\ref{Prop:WienerEquiv}. By H{\"o}lder's inequality we also have
\begin{equation}\label{Eq:WienerSTFT1}
\nm {V_{\phi}f}{\sfW ^{\mabfr} _{E}(\omega ,\ell ^\mabfp _{\sigma})}
\lesssim
\nm {V_{\phi}f}{\sfW ^\infty _{E}(\omega ,\ell ^\mabfp _{\sigma})}.
\end{equation}
We need to prove the reversed inequality
\begin{equation}\label{Eq:WienerSTFT2}
\nm {V_{\phi}f}{\sfW ^\infty _{E}(\omega ,\ell ^\mabfp _{\sigma})}
\lesssim
\nm {V_{\phi}f}{\sfW ^{\mabfr} _{E}(\omega ,\ell ^\mabfp _{\sigma})},
\end{equation}
and it suffices to prove this for $\mabfr = (r,\dots ,r)$ for some $r\in (0,1]$
in view of H{\"o}lder's inquality.

\par

First we consider the case when $\phi =\phi _0$. If $r>0$ is small enough
and $j\in \Lambda _E$, then
Lemma \ref{SubharmonicEst} gives
for some $(x_j,\xi _j)\in j+\kappa (E)$ that
\begin{equation*}
\nm {V_{\phi _0}f}{L^\infty (j+\kappa (E))}
=
|{V_{\phi _0}f}(x_j,\xi _j)|
\lesssim
\nm {V_{\phi _0}f}{L^r (B_r(x_j,\xi _j))}
\le
\nm {V_{\phi _0}f}{L^r (j+\kappa _M(E))}
\end{equation*}
Hence,
\begin{multline*}
\nm {V_{\phi _0}f}{\sfW ^\infty _{E}(\omega ,\ell ^\mabfp _{E})}
=
\nm { \{ \nm {V_{\phi _0}f}{L^\infty (j+\kappa (E)}\omega (j)\}
_{j\in \Lambda _E}}{\ell ^\mabfp _{E}(\Lambda _E)}
\\[1ex]
\lesssim
\nm { \{ \nm {V_{\phi _0}f}{L^r (j+\kappa _M(E)}\omega (j)\}
_{j\in \Lambda _E}}{\ell ^\mabfp _{E}(\Lambda _E)}
\\[1ex]
\asymp
\nm { \{ \nm {V_{\phi _0}f}{L^r (j+\kappa (E)}\omega (j)\}
_{j\in \Lambda _E}}{\ell ^\mabfp _{E}(\Lambda _E)}
=
\nm {V_{\phi _0}f}{\sfW ^r _{E}(\omega ,\ell ^\mabfp _{E})},
\end{multline*}
and \eqref{Eq:WienerSTFT2} holds for $\phi = \phi _0$.

\par

Next suppose that $\phi$ is arbitrary and let $n\ge 1$ be
a large enough integer such that if
$$
E_n = \frac 1n \cdot E \equiv \left \{  \frac {e_1}n,\dots ,\frac {e_{2d}}n \right \}
\quad \text{and}\quad
\Lambda
= \frac 1n \Lambda _E = \Lambda _{E_n},
$$
then
$$
\{ \phi (\cdo -k)e^{i\scal \cdo \kappa}\} _{(k,\kappa )\in \Lambda}
$$
is a frame. Since $\phi \in M^1_{(\Theta _\rho v)}$, it follows
that its canonical dual $\psi$ also belongs to
$M^1_{(\Theta _\rho v)}$ (cf. \cite[Theorem S]{Gc1}).
Consequently, any $f$ possess the expansions
\begin{align}
f
&=
\sum _{(k,\kappa )\in \Lambda} V_\phi f(k,\kappa )
\psi (\cdo -k)e^{i\scal \cdo \kappa}
\notag
\\[1ex]
&=
\sum _{(k,\kappa )\in \Lambda} V_\psi f(k,\kappa )
\phi (\cdo -k)e^{i\scal \cdo \kappa}
\label{Eq:GaborExp}
\end{align}
with suitable interpretation of convergences.

\par

Let
$$
F_0=|V_{\phi _0}f|\cdot \omega ,
\quad
F=|V_{\phi }f|\cdot \omega ,
\quad \text{and}\quad
a(\mabfk ) = |V_\psi \phi _0 (-\mabfk )| .
$$
As in the proofs of \cite[Theorem 3.1]{GaSa} and
\cite[Proposition 3.1]{Toft15} we use the fact that
\begin{equation}\label{Eq:STFTWindTrans}
|V_{\phi _0}f| \le (2\pi )^{-\frac d2}
a *_{[E_n ]}|V_\phi f| ,
\end{equation}
which follows from
\begin{multline*}
|V_{\phi _0}f(x,\xi )| = (2\pi )^{-\frac d2}|(f,e^{i\scal \cdo {\xi}}
\phi _0(\cdo -x))|
\\[1ex]
\le (2\pi )^{-\frac d2}\sum _{(k,\kappa )\in \Lambda}
|(V_\psi \phi _0)(k,\kappa )|  |(f,e^{i\scal \cdo {\xi +\kappa}}
\phi (\cdo -x-k))|
\\[1ex]
=  (2\pi )^{-\frac d2}\sum _{(k,\kappa )\in \Lambda}
|(V_\psi \phi _0)(k,\kappa )|  |V_{\phi }f(x+k,\xi +\kappa )|
\\[1ex]
=
(a *_{[E_n ]} |V_{\phi }f|)(x,\xi  ).
\end{multline*}
Here we have used \eqref{Eq:GaborExp} with $\phi _0$ in place of $f$,
in the inequality. By using that
$$
\omega (x,\xi )\lesssim v(k,\kappa )\omega (x+k,\xi +\kappa ),
$$
\eqref{Eq:STFTWindTrans} gives
\begin{equation}\label{Eq:F0FConvEst}
F_0 \lesssim (a\cdot v)*_{[E_n]}F.
\end{equation}
If we set
$$
b_0(j) = \int _{j+\kappa (E)} |F_0(X)|^r\, dX
\quad \text{and}\quad
b(j) = \int _{j+\kappa (E)} |F(X)|^r\, dX,
\qquad j\in \Lambda ,
$$
integrate \eqref{Eq:F0FConvEst} and use the fact
that $r\le 1$, we get for $j\in \Lambda$ that
\begin{multline*}
b_0(j)
\lesssim
\int _{j+\kappa (E)}
\left (
\sum _{k\in \Lambda}a(k)v(k)|F(X-k)|
\right )^r
\, dX
\\[1ex]
\lesssim
\sum _{k\in \Lambda}(a(k)v(k))^r \int _{j+\kappa (E)}
|F(X-k)|^r\, dX 
= ((a\cdot v)^r * b ))(j),
\end{multline*}
where $*$ is the discrete convolution with respect to the
lattice $\Lambda$.

\par

Let $\mabfq = \mabfp /r$.
Then $\min (\mabfq )\ge 1$,
and Young's inequality applied on the last inequality
gives
\begin{multline}\label{Eq:F0FComputations}
\nm {F_0}{\sfW ^r_E(1,\ell ^{\mabfp}_E )}
=
\nm {b_0^{\frac 1r}}{\ell ^{\mabfp} _E (\Lambda _E)}
\lesssim
\nm {(a\cdot v)^r*b}{\ell ^{\mabfq} _E (\Lambda _E)}
^{\frac 1r}
\\[1ex]
\le
\nm {(a\cdot v)^r*b}{\ell ^{\mabfq} _E (\Lambda )}^{\frac 1r}
\le
\left (
\nm {(a\cdot v)^r}{\ell ^1(\Lambda )}
\nm b{\ell ^{\mabfq} _E (\Lambda )}
\right )^{\frac 1r}
\\[1ex]
\asymp
\nm {a}{\ell ^r_{(v)}(\Lambda )}
\nm b{\ell ^{\mabfq} _E (\Lambda )}
^{\frac 1r}
\le
\nm {a}{\ell ^1_{(\Theta _\rho v)}(\Lambda )}
\nm {b^{\frac 1r}}{\ell ^{\mabfp} _E (\Lambda )}
\\[1ex]
\lesssim
\nm {\phi}{M^1_{(\Theta _\rho v)}}
\nm {b^{\frac 1r}}{\ell ^{\mabfp} _E (\Lambda )}.
\end{multline}
In the last steps we have used H{\"o}lder's inequality and
$$
\nm {V_{\phi _0}\phi}{L^1_{(\Theta _\rho v)}(\rr {2d})}
\asymp
\nm { \{ \nm {V_{\phi _0}\phi}{L^\infty (j+\kappa (E)}
(\Theta _\rho v)(j)\}
_{j\in \Lambda _E}}{\ell ^1(\Lambda _E)}
\asymp \nm \phi {M^1_{(\Theta _\rho v)}}.
$$

\par

We have
\begin{equation*}
\nm {b^{\frac 1r}}{\ell ^{\mabfp} _E (\Lambda )}
\\[1ex]
=
\nm { \{ \nm F{L^r(j+\kappa (E))} \} _{j\in \Lambda }}
{\ell ^{\mabfp}_E (\Lambda )},
\end{equation*}
$\bigcup _{j\in \Lambda _E}(j+\kappa (E))
=\rr d$, $\Lambda _E$ and $\Lambda$ are lattices
such that $\Lambda$ contains $\Lambda _E$, and
$\Lambda$ is $n$ times as dense as $\Lambda _E$. From
these facts it follows by straight-forward computations
that
\begin{multline*}
\nm { \{ \nm F{L^r(j+\kappa (E))} \} _{j\in \Lambda }}
{\ell ^{\mabfp}_E (\Lambda )}
\asymp
\nm { \{ \nm F{L^r(j+\kappa (E))} \} _{j\in \Lambda _E}}
{\ell ^{\mabfp}_E (\Lambda _E)}
\\[1ex]
\asymp
\nm { \{ \nm {V_\phi f}{L^r(j+\kappa (E))}\omega (j)
\} _{j\in \Lambda _E}}
{\ell ^{\mabfp}_E (\Lambda _E)}
= \nm F{\sfW ^r_E(\omega ,\ell ^{\mabfp}_E )}.
\end{multline*}
Here the second relation follows from the fact that
$\omega (x)\asymp \omega (j)$ when $j\in \Lambda _E$ and
$x\in j+\kappa (E)$, which follows from \eqref{moderate}.
By combining these relations with \eqref{Eq:F0FComputations}
we get
$$
\nm {F_0}{\sfW ^r_E(1,\ell ^{\mabfp}_E )}
\lesssim
\nm {F}{\sfW ^r_E(1,\ell ^{\mabfp}_E )}.
$$
Hence, Proposition \ref{Prop:WienerEquiv} and the fact that we have
already proved \eqref{Eq:WienerSTFT2} when $\phi$ equals $\phi _0$
gives
\begin{multline*}
\nm {V_{\phi}f}{\sfW ^\infty _{E}(\omega ,\ell ^\mabfp _{\sigma})}
\asymp
\nm {V_{\phi _0}f}{\sfW ^\infty _{E}(\omega ,\ell ^\mabfp _{\sigma})}
\asymp
\nm {F_0}{\sfW ^\infty _{E}(1,\ell ^\mabfp _{\sigma})}
\lesssim
\nm {F_0}{\sfW ^{\mabfr} _{E}(1,\ell ^\mabfp _{\sigma})}
\\[1ex]
\lesssim
\nm {F}{\sfW ^{\mabfr} _{E}(1,\ell ^\mabfp _{\sigma})}
\asymp
\nm {V_{\phi}f}{\sfW ^{\mabfr} _{E}(\omega ,\ell ^\mabfp _{\sigma})}.
\qedhere
\end{multline*}
\end{proof}

\par

By combining Proposition \ref{Prop:WienerEquiv}$'$ with
Proposition \ref{Prop:WienerRel1}
and Remark \ref{Rem:IncreasingWienerSpaces}
we get the following.

\par

\begin{prop}\label{Prop:SpecCaseWienerRel1}
Let $E_0$ be a basis for $\rr d$, $E_0'$ be its dual basis,
$E=E_0\times E_0'$, $\mabfq ,\mabfr \in (0,\infty ]^{d}$,
$\omega _0,v_0\in \mascP _E(\rr {d})$
be such that $\omega _0$ is $v_0$-moderate,
$\omega (x,\xi )=\omega _0(\xi )$,
$v(x,\xi )=v_0(\xi )$, $\Theta _\rho v$ be
as in \eqref{Eq:SubMultMod} with strict inequality when $r<1$, and let $\phi \in
M^1_{(\Theta _\rho v)} (\rr d)\setminus 0$.
Then
\begin{alignat*}{2}
M^{\infty ,\mabfq}_{E,(\omega )}(\rr d)
&=
\splM ^{\mabfr}_{E_0}(\omega _0,L^{\mabfq} _{E_0'}(\rr d)),&
\quad
W^{\infty ,\mabfq}_{E,(\omega )}(\rr d)
&=
W^{\mabfr}_{E_0}(\omega _0,L^{\mabfq} _{E_0'}(\rr d)),
\intertext{and}
\nm f{M^{\infty ,\mabfq}_{E,(\omega )}}
&\asymp
\nm {V_\phi f}
{\sfW ^{\mabfr}_{1,E_0}(\omega ,\ell ^\infty _E,L^{\mabfq}_{E_0'})},&
\quad
\nm f{W^{\infty ,\mabfq}_{E,(\omega )}}
&\asymp
\nm {V_\phi f}
{\sfW ^{\mabfr}_{2,E_0}(\omega ,\ell ^\infty _E,L^{\mabfq}_{E_0'})}.
\end{alignat*}
\end{prop}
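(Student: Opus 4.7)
My plan is to obtain Proposition \ref{Prop:SpecCaseWienerRel1} by concatenating Proposition \ref{Prop:WienerEquiv}$'$ with the first chain of Proposition \ref{Prop:WienerRel1} and sandwiching, working throughout with the phase-split basis $E=E_0\times E_0'$ of $\rr{2d}$ and the extended exponent $\mabfp=(\infty,\mabfq)\in(0,\infty]^{2d}$. The first step is bookkeeping: identify $E_1=E_0$, $E_2=E_0'$ in the notation of Proposition \ref{Prop:WienerRel1}, and unfold the definition of $\splM ^{\mabfr}_{E_0}(\omega _0,L^{\mabfq}_{E_0'})$ via Definition \ref{Def:AltModSpace}; using that $\omega (x,\xi )=\omega _0(\xi )$ depends only on the frequency variable, Remark \ref{Rem:PerWienerSpace} lets one absorb $\omega _0$ into the outer $L^{\mabfq}_{E_0'}$-component, so that the defining norm coincides with $\nm {V_\phi f}{\sfW ^{\mabfr}_{1,E_0}(\omega ,\ell ^\infty _{E_0},L^{\mabfq}_{E_0'})}$.

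The second step is to apply the first chain of Proposition \ref{Prop:WienerRel1} to $F=V_\phi f$ with the $\mabfp$ there equal to $(\infty,\dots,\infty)$. This yields the inclusions
\begin{equation*}
\sfW ^{\mabfr ,\infty}_E(\omega ,\ell ^{\infty ,\mabfq}_E)
\hookrightarrow
\sfW ^{\mabfr}_{1,E_0}(\omega ,\ell ^\infty _{E_0},L^{\mabfq}_{E_0'})
\hookrightarrow
\sfW ^{r_0}_E(\omega ,\ell ^{\infty ,\mabfq}_E)
\end{equation*}
for some sufficiently small $r_0\in (0,\min (1,\mabfq,\mabfr)]$. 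The third step is to apply Proposition \ref{Prop:WienerEquiv}$'$ with the $2d$-dimensional exponent $(\infty,\mabfq)$: both the leftmost and the rightmost spaces in the above chain carry norms equivalent to $\nm f{M^{\infty ,\mabfq}_{E,(\omega )}}$, since that proposition asserts precisely the independence of the local exponent $\mabfr$ (compare also Remark \ref{Rem:IncreasingWienerSpaces}). Sandwiching then forces
\begin{equation*}
\nm f{M^{\infty ,\mabfq}_{E,(\omega )}}
\asymp
\nm {V_\phi f}{\sfW ^{\mabfr}_{1,E_0}(\omega ,\ell ^\infty _{E_0},L^{\mabfq}_{E_0'})}
=
\nm f{\splM ^{\mabfr}_{E_0}(\omega _0,L^{\mabfq}_{E_0'})},
\end{equation*}
which yields the first identity together with the first norm equivalence.

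The $W$-version is obtained by a symmetric argument using the second chain of Proposition \ref{Prop:WienerRel1} (with $2,E_2'$ in place of $1,E_1$), combined with Proposition \ref{Prop:WienerEquiv}$'$ applied on the dual phase-split basis $E'=E_0'\times E_0$; here the relevant $2d$-dimensional exponent is $(\mabfq,\infty)$, and Remark \ref{Rem:PerWienerSpace} again takes care of the weight placement because $\omega$ and $v$ depend on $\xi$ only. The main obstacle is purely notational: one must carefully align the orderings of exponents and the roles of the four bases $E$, $E_0$, $E_0'$, $E'$ so that the chain of Proposition \ref{Prop:WienerRel1} and the $\mabfr$-independence from Proposition \ref{Prop:WienerEquiv}$'$ apply verbatim, and one must verify that the assumption $\omega (x,\xi )=\omega _0(\xi )$ is what allows the outer weight of the modulation norm to appear on the $L^{\mabfq}_{E_0'}$ component (rather than inside the Wiener local part). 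Once this alignment is set up, no new analytic input beyond the cited results is required.
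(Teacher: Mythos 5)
Your proposal is correct and follows exactly the route the paper intends: the paper's entire proof is the remark that the proposition follows "by combining Proposition \ref{Prop:WienerEquiv}$'$ with Proposition \ref{Prop:WienerRel1} and Remark \ref{Rem:IncreasingWienerSpaces}", and your sandwich argument (first chain of \eqref{Eq:WienerRel1} with $\mabfp =(\infty ,\dots ,\infty )$ for the $M$-case, the chain \eqref{Eq:WienerRel2} together with the monotonicity in $\mabfr$ for the $W$-case, both ends identified via Proposition \ref{Prop:WienerEquiv}$'$, and Remark \ref{Rem:PerWienerSpace} for the weight placement) is precisely that combination spelled out.
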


\par

\subsection{Periodic elements in modulation spaces}

\par

By a straight-forward combination of
Propositions \ref{Prop:PerMod}  and
\ref{Prop:SpecCaseWienerRel1}
we get the following. The details are left for the reader.

\par

\begin{prop}\label{Prop:PeriodicMod}
Let $E_0$ be a basis for $\rr d$, $E_0'$ be its dual basis,
$E=E_0\times E_0'$, $\mabfq ,\mabfr \in (0,\infty ]^{d}$,
$\omega _0,v_0\in \mascP _E(\rr {d})$
be such that $\omega _0$ is $v_0$-moderate,
$\omega (x,\xi )=\omega _0(\xi )$,
$v(x,\xi )=v_0(\xi )$, $\Theta _\rho v$ be
as in \eqref{Eq:SubMultMod} with strict inequality
when $r<1$, and let $\phi \in
M^1_{(\Theta _\rho v)} (\rr d)\setminus 0$.
Then
\begin{multline}\label{Eq:NormsEquivPerElem}
\nm f{M^{\infty ,\mabfq}_{E,(\omega )}}
\asymp 
\nm f{W^{\infty ,\mabfq}_{E,(\omega )}}
\asymp
\nm f{\splM ^{\mabfr}_{E_0}(\omega ,L^{\mabfq} _{E_0'})}
\\[1ex]
\asymp
\nm f{\sfW ^{\mabfr}_{E_0}(\omega ,L^{\mabfq} _{E_0'})}
\asymp
\nm f{\maclL ^E(\omega _0,\ell ^{\mabfq}_{E_0'}(\Lambda _{E_0}'))},
\qquad f\in (\maclE _0^{E})'(\rr d).
\end{multline}
\end{prop}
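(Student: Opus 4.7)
The plan is to obtain the five-way equivalence in \eqref{Eq:NormsEquivPerElem} by chaining together Proposition \ref{Prop:SpecCaseWienerRel1} and Proposition \ref{Prop:PerMod}. Since both propositions are already in hand, no new analytical input is needed and the argument is essentially a bookkeeping exercise; this matches the author's remark that the details are left to the reader.

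First I would invoke Proposition \ref{Prop:SpecCaseWienerRel1}, taking $E_0$ and $\omega_0$ as in the hypothesis of Proposition \ref{Prop:PeriodicMod}. This directly yields
\begin{equation*}
\nm f{M^{\infty, \mabfq}_{E,(\omega)}} \asymp \nm f{\splM^{\mabfr}_{E_0}(\omega, L^{\mabfq}_{E_0'})} \quad \text{and} \quad \nm f{W^{\infty, \mabfq}_{E,(\omega)}} \asymp \nm f{\sfW^{\mabfr}_{E_0}(\omega, L^{\mabfq}_{E_0'})}
\end{equation*}
for every $\mabfr \in (0, \infty]^d$. In particular, the right-hand sides are independent of $\mabfr$, so the $\splM^{\mabfr}_{E_0}$ and $\sfW^{\mabfr}_{E_0}$ quasi-norms are equivalent across all choices of the local exponent. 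This disposes of the first four quasi-norms appearing in \eqref{Eq:NormsEquivPerElem}.

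Next I would specialize to $\mabfr = (\infty,\dots,\infty)$ and apply Proposition \ref{Prop:PerMod} with its abstract data taken as the basis $E_0$ and the $E_0'$-split Lebesgue space $\mascB = L^{\mabfq}_{E_0'}$. Since $f$ is $E_0$-periodic, Proposition \ref{Prop:PerMod} produces
\begin{equation*}
\splM^{\infty}_{E_0}(\omega, L^{\mabfq}_{E_0'}) \cap (\maclE_0^E)'(\rr d) = \splW^{\infty}_{E_0}(\omega, L^{\mabfq}_{E_0'}) \cap (\maclE_0^E)'(\rr d) = \maclL^E(\omega_0, \ell^{\mabfq}_{E_0'}(\Lambda_{E_0}')),
\end{equation*}
together with the corresponding quasi-norm equivalence. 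Concatenating with the previous step closes the chain of five equivalent quasi-norms.

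I expect the main technical nuisance to be purely notational rather than conceptual: one must consistently track the roles of $E$, $E_0$, $E_0'$ and of the dual lattices $\Lambda_{E_0}$ and $\Lambda_{E_0}'$, and in particular verify that the abstract discrete version of $L^{\mabfq}_{E_0'}$ appearing in Proposition \ref{Prop:PerMod} coincides with the concrete sequence space $\ell^{\mabfq}_{E_0'}(\Lambda_{E_0}')$ that figures in the statement. This identification is a direct consequence of the periodicity-in-$x$ of $|V_\phi f|$ recorded in \eqref{Eq:AltFormPerNorms}, which permits the continuous $\splM^{\infty}$ and $\splW^{\infty}$ quasi-norms of a periodic distribution to collapse to a discrete sum over the dual lattice $\Lambda_{E_0}'$ weighted by $\omega_0$.
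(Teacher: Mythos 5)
Your proposal is correct and follows exactly the route the paper indicates: the result is obtained by combining Proposition \ref{Prop:SpecCaseWienerRel1} (which identifies $M^{\infty,\mabfq}_{E,(\omega)}$ with $\splM^{\mabfr}_{E_0}(\omega_0,L^{\mabfq}_{E_0'})$ and $W^{\infty,\mabfq}_{E,(\omega)}$ with the corresponding $\splW$-space, uniformly in $\mabfr$) with Proposition \ref{Prop:PerMod} (which, via the periodicity collapse in \eqref{Eq:AltFormPerNorms}, identifies both with $\maclL^E(\omega_0,\ell^{\mabfq}_{E_0'}(\Lambda_{E_0}'))$ on $(\maclE_0^{E})'(\rr d)$ and thereby links the $M$- and $W$-sides of the chain). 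This is precisely the ``straight-forward combination'' the paper invokes, with the details filled in.
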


\par

As an immediate consequence of the previous result we get the
following extension of Proposition \ref{Prop:PerMod}. The details
are left for the reader.

\par

\renewcommand{\rubrik}{Proposition \ref{Prop:PerMod}$'$}

\par

\begin{tom}
Let $E$ be a basis of $\rr d$, $\mabfr \in (0,\infty ]^d$, $r\in(0,1]$,
$\mascB \subseteq L^{r}_{loc}(\rr d)$ be an $E'$-split
Lebesgue space, $\ell _{\mascB ,E}(\Lambda _E)$ its discrete version,
and let $\omega\in \mascP _E(\rr d)$. Then
\begin{equation*}
\maclL ^{E}(\omega ,\ell _{\mascB ,E})
=
\splM^{\mabfr}_{E}(\omega ,\mascB )\bigcap (\maclE _0^E)'(\rr d)
=
\splW^{\mabfr}_{E}(\omega ,\mascB )\bigcap (\maclE _0^E)'(\rr d)
\end{equation*}
\end{tom}

\par

\begin{rem}
Let
$$
E_0= \{ e_1,\dots ,e_d\} ,\quad 
E_0'= \{ \ep _1,\dots ,\ep _d\} ,\quad 
\mabfq = (q_1,\dots ,q_d),\quad
\mabfr = (r_1,\dots ,r_d),
$$
$\omega$, $v$  and $\phi$ be the same as in Proposition
\ref{Prop:PeriodicMod}, 
and let $r_0\le \min (\mabfr )$ and $f\in f\in (\maclE _0^{E})'(\rr d)$
with Fourier series expansion \eqref{Eq:Expan2}.
Then \eqref{Eq:ModPerNormEquiv1}--\eqref{Eq:AltFormPerNorms} and
\eqref{Eq:NormsEquivPerElem} imply that 
\begin{equation}\label{Eq:ModPerNormEquiv3}
\nm {V_\phi f \cdot \omega}{L^{\mabfr ,\mabfq}_E(\kappa (E_0)\times \rr d)}
\asymp
\nm {V_\phi f \cdot \omega}{L^{\mabfq ,\mabfr}_{E'}(\rr d \times \kappa (E_0))}
\asymp
\nm {c(f,\cdo )}{\ell ^{\mabfq}_{E_0',(\omega _0)}}.
\end{equation}

\par

Let $\nmm \cdo$ be the quasi-norm on the left-hand side of
\eqref{Eq:ModPerNormEquiv3}, after the orders of the
involved $L^{q_k}_{e_k'}(\mathbf R)$ and $L^{r_k}_{e_k}(\kappa (e_k))$
quasi-norms have been permuted in such way that the internal order of the hitting
$L^{q_k}_{e_k'}(\mathbf R)$ quasi-norms remains the same. Then
\begin{equation}\label{Eq:PermutNormEst}
\nm F{L^{r_0,\mabfq}_E(\kappa (E_0)\times \rr d)}
\lesssim
\nmm F
\lesssim
\nm F{L^{\infty,\mabfq}_E(\kappa (E_0)\times \rr d)},
\end{equation}
by repeated application of H{\"o}lder's inequality. A combination of 
\eqref{Eq:ModPerNormEquiv3} and \eqref{Eq:PermutNormEst} give
\begin{equation}\label{Eq:ModPerNormEquiv4}
\nmm {V_\phi f \cdot \omega}
\asymp
\nm {c(f,\cdo )}{\ell ^{\mabfq}_{E_0',(\omega _0)}}.
\end{equation}

\par

In particular, if $e_j$ are the same as in Remark
\ref{Rem:CommentPhasSplit}, $E_*$ is the ordered
basis $\{ e_1,e_{d+1}, \dots ,e_d,e_{2d}\}$ of $\rr {2d}$,
$$
\Omega
=
\sets {y_1e_1+\cdots +y_{2d}e_{2d}}
{0\le y_j\le 1\ \text{and}\ y_{d+j}\in \mathbf R,\, j=1,\dots ,d}
$$
and $\mabfq _0=(q_1,q_1,q_2,q_2,\dots ,q_d,q_d)\in (0,\infty ]^{2d}$, then
\begin{equation}\label{Eq:ModPerNormEquiv5}
\nm {V_\phi f \cdot \omega}{L^{\mabfq _0}_{E_*}(\Omega )}
\asymp
\nm {c(f,\cdo )}{\ell ^{\mabfq}_{E_0',(\omega _0)}}.
\end{equation}
\end{rem}

\par

\begin{rem}
With the same notation as in the previous remark, we note that
if $E_0'$ is the standar basis of $\rr d$, $X_j=(x_j,\xi _j)$, $j=1,\dots ,d$,
$I=\mathbf R\times [0,2\pi]$ and $\max (\mabfq )<\infty$,
then \eqref{Eq:ModPerNormEquiv5} is the same as
\begin{multline}\tag*{(\ref{Eq:ModPerNormEquiv5})$'$}
\left (
\int _{I}
\left (
\cdots
\left (
\int _{I}
|V_\phi f(x,\xi )\omega _0(\xi )|^{q_1}\, dX_1
\right )^{\frac {q_2}{q_1}}
\cdots
\right )^{\frac {p_d}{p_{d-1}}}
\, dX_d\right )^{\frac 1{p_d}}
\\[1ex]
\asymp
\left (
\sum _{\alpha _d\in \mathbf Z}
\left (
\cdots
\left (
\sum _{\alpha _1\in \mathbf Z}
|c(f,\alpha )\omega _0(\alpha )|^{q_1}
\right )^{\frac {q_2}{q_1}}
\cdots
\right )^{\frac {p_d}{p_{d-1}}}
\right )^{\frac 1{p_d}}
\end{multline}
\end{rem}

\par

\end{document}